\documentclass{article}%
\usepackage{amsmath}
\usepackage{amsfonts}
\usepackage{amssymb}
\usepackage{graphicx}%
\setcounter{MaxMatrixCols}{30}
\providecommand{\U}[1]{\protect\rule{.1in}{.1in}}
\newtheorem{theorem}{Theorem}

\newtheorem{definition}[theorem]{Definition}

\newtheorem{proposition}[theorem]{Proposition}

\newenvironment{proof}[1][Proof]{\noindent\textbf{#1.} }{\ \rule{0.5em}{0.5em}}
\begin{document}

\title{Renormalized solutions for stochastic transport equations and the
regularization by bilinear multiplicative noise}
\author{S. Attanasio, F. Flandoli}
\maketitle

\begin{abstract}
A linear stochastic transport equation with non-regular coefficients is
considered. Under the same assumption of the deterministic theory, all weak
$L^{\infty}$-solutions are renormalized. But then, if the noise is
non-degenerate, uniqueness of weak $L^{\infty}$-solutions does not require
essential new assumptions, opposite to the deterministic case where for
instance the divergence of the drift is asked to be bounded. The proof gives a
new explanation why bilinear multiplicative noise may have a regularizing effect.

\end{abstract}

\section{Introduction}

Consider the deterministic linear transport equation in $\mathbb{R}^{d}$
\begin{equation}
\frac{\partial u}{\partial t}+\left(  b\cdot\nabla\right)  u=0,\qquad
u|_{t=0}=u_{0}\label{det transport eq}%
\end{equation}
in a non-regular framework, namely when the given vector field $b:\left[
0,T\right]  \times\mathbb{R}^{d}\rightarrow\mathbb{R}^{d}$ satisfies%
\begin{equation}
b,\operatorname{div}b\in L_{loc}^{1}\left(  \left[  0,T\right]  \times
\mathbb{R}^{d}\right)  \label{assumption 1 on b}%
\end{equation}
and the solution $u$ is of class $L^{\infty}\left(  \left[  0,T\right]
\times\mathbb{R}^{d}\right)  $, with $u_{0}\in L^{\infty}\left(
\mathbb{R}^{d}\right)  $. Di Perna and Lions \cite{DiPernaLions} have
introduced the notion of renormalized solution to this equation: it is a
solution such that
\begin{equation}
\frac{\partial\beta\left(  u\right)  }{\partial t}+\left(  b\cdot
\nabla\right)  \beta\left(  u\right)  =0\label{beta-transformed}%
\end{equation}
for all functions $\beta\in C^{1}\left(  \mathbb{R}\right)  $. When
\begin{equation}
b\in L^{1}\left(  0,T;W_{loc}^{1,1}\left(  \mathbb{R}^{d}\right)  \right)
\label{weak differentiability assumption}%
\end{equation}
a basic commutator lemma between smoothing convolution and $\left(
b\cdot\nabla\right)  $ can be proved and, as a consequence, all $L^{\infty}%
$-weak solutions are renormalized, see \cite{DiPernaLions}. This fact is
fundamental to prove uniqueness of weak solutions to equation
(\ref{det transport eq}). A main consequence is the uniqueness when the
additional conditions
\[
\frac{\left\vert b\right\vert }{1+\left\vert x\right\vert }\in L^{1}\left(
0,T;L^{\infty}\left(  \mathbb{R}^{d}\right)  \right)  ,\qquad
\operatorname{div}b\in L^{1}\left(  0,T;L^{\infty}\left(  \mathbb{R}%
^{d}\right)  \right)
\]
are fulfilled, see \cite{DiPernaLions}. These results have been generalized by
Ambrosio \cite{Ambrosio} to $BV_{loc}$-vector fields (in place of
$W_{loc}^{1,1}$). The $BV$-framework is the one adopted in the sequel, where
we make extensive use of ideas and results from \cite{Ambrosio}. The notion of
renormalized solutions has been investigated further by several authors, see
for instance \cite{AmbrosioCrippaFigalliSpinolo}, \cite{BochCrippa0},
\cite{BochudCrippa}, \cite{Figalli}, \cite{LeBrisLions0}, \cite{LeBrisLions},
\cite{Lopez et al} and many others.

Many of the previous results can be extended quite easily to a stochastic
framework of the form%
\begin{equation}
du+\left(  b\cdot\nabla\right)  udt+\sum_{k=1}^{d}\partial_{k}u\circ
dW^{k}=0,\qquad u|_{t=0}=u_{0} \label{stoch transport eq}%
\end{equation}
where $W^{k}$ are independent Brownian motions; in particular, we give below
the analogous result of renormalizability of all solutions, under the same
assumptions on $b$ as in \cite{DiPernaLions}. But the reason for developing
this extension is the fact that, after we have proved that all solutions are
renormalized, we get uniqueness in cases not covered by the classical
deterministic theory. One of our results is that, essentially, we may just get
rid of the requirement $\operatorname{div}b\left(  t,\cdot\right)  \in
L^{\infty}\left(  \mathbb{R}^{d}\right)  $ which is responsible for the
exclusion of examples like $b\left(  x\right)  =\sqrt{\left\vert x\right\vert
}$, $d=1$:

\begin{theorem}
\label{main theorem}If $b=b_{1}+b_{2}$ with

\begin{itemize}
\item $b,\operatorname{div} b\in L^{1}_{loc}([0,T]\times\mathbb{R}%
^{d}),\;b_{t}\in BV_{loc}(\mathbb{R}^{d};\mathbb{R}^{d})$ for a.e. $t\in[0,T]$

\item For some $N>d$
\[
\int^{T}_{0}\int_{\mathbb{R}^{d}} \frac{|Db_{t}|}{(1+|x|)^{N}}dxdt<\infty
\]

\item $b_{1}\in L^{2}\left(  0,T;L^{\infty}\left(  \mathbb{R}^{d}\right)
\right)  $

\item $\frac{\left\vert b_{2}\right\vert }{1+\left\vert x\right\vert }\in
L^{1}\left(  0,T;L^{\infty}\left(  \mathbb{R}^{d}\right)  \right)  ,$
$\operatorname{div}b_{2}\in L^{1}\left(  0,T;L^{\infty}\left(  \mathbb{R}%
^{d}\right)  \right)  $
\end{itemize}

then there exists a unique weak $L^{\infty}$-solution of equation
(\ref{stoch transport eq}).
\end{theorem}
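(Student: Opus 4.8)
The plan is to obtain uniqueness from the renormalization property of weak $L^{\infty}$-solutions — established in the paper and requiring only the first two hypotheses on $b$ — by the following mechanism: after renormalizing and taking expectations, equation \eqref{stoch transport eq} collapses to a \emph{deterministic uniformly parabolic} equation, and the second order term lets the basic energy estimate accommodate a merely $L^{\infty}$ drift with no control on its divergence. Existence is the routine direction.

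By linearity of equation \eqref{stoch transport eq}, two weak $L^{\infty}$-solutions with the same initial datum differ by a weak $L^{\infty}$-solution $u$ with $u_{0}=0$, so it is enough to show $u\equiv0$. Applying renormalizability with $\beta(r)=r^{2}$, the field $v:=u^{2}$ is again a weak $L^{\infty}$-solution of \eqref{stoch transport eq} with $v|_{t=0}=0$, and $v\ge0$. Rewriting the Stratonovich equation for $v$ in Itô form brings out the Laplacian produced by the bilinear noise,
\[
dv+(b\cdot\nabla)v\,dt+\sum_{k=1}^{d}\partial_{k}v\,dW^{k}=\tfrac12\Delta v\,dt,
\]
in the weak sense (distributional in $x$, integral in $t$). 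Tested against $\varphi\in C_{c}^{\infty}(\mathbb{R}^{d})$, the integrand $\langle v(s),\partial_{k}\varphi\rangle$ of the martingale part is dominated by $\|u_{0}\|_{\infty}^{2}\|\partial_{k}\varphi\|_{L^{1}}$, so the stochastic integral is a true martingale; taking expectations and setting $\bar v:=\mathbb{E}[v]=\mathbb{E}[u^{2}]$, which is bounded, nonnegative, and zero at $t=0$, one finds that $\bar v$ is a distributional solution of
\[
\partial_{t}\bar v-\tfrac12\Delta\bar v+(b\cdot\nabla)\bar v=0,\qquad \bar v|_{t=0}=0 .
\]
This is exactly the advertised regularization by bilinear noise: renormalization followed by averaging leaves a genuine elliptic term.

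Next I would carry out a weighted energy estimate for $\bar v$. Fix $\rho(x)=(1+|x|)^{-N}$ with $N>d$, so that $\rho\in L^{1}(\mathbb{R}^{d})$, $|\nabla\rho|\le N(1+|x|)^{-1}\rho$, $|\Delta\rho|\le C_{N}\rho$, and $\int\bar v(t,x)^{2}\rho(x)\,dx$ is finite, uniformly in $t$. Mollify in space, $\bar v^{\varepsilon}:=\bar v*\theta_{\varepsilon}$; by Ambrosio's commutator lemma — applicable because $b_{t}\in BV_{loc}$, $\operatorname{div}b\in L^{1}_{loc}$, and $\int_{0}^{T}\!\int_{\mathbb{R}^{d}}|Db_{t}|(1+|x|)^{-N}\,dx\,dt<\infty$ — $\bar v^{\varepsilon}$ solves the same parabolic equation up to an error $R^{\varepsilon}\to0$ in $L^{1}_{loc}([0,T]\times\mathbb{R}^{d})$. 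Multiplying by $\bar v^{\varepsilon}\rho$ and integrating over $\mathbb{R}^{d}$, the elliptic term contributes $-\tfrac12\int|\nabla\bar v^{\varepsilon}|^{2}\rho+\tfrac14\int(\bar v^{\varepsilon})^{2}\Delta\rho$. Now split $b=b_{1}+b_{2}$. For the bounded part I do \emph{not} integrate by parts, but estimate directly $\big|\int(b_{1}\cdot\nabla\bar v^{\varepsilon})\bar v^{\varepsilon}\rho\big|\le\tfrac14\int|\nabla\bar v^{\varepsilon}|^{2}\rho+\|b_{1}(t)\|_{\infty}^{2}\int(\bar v^{\varepsilon})^{2}\rho$, so the gradient term is absorbed by the dissipation; this step uses only $b_{1}\in L^{2}(0,T;L^{\infty})$ and is precisely what dispenses with the classical hypothesis $\operatorname{div}b\in L^{\infty}$. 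For the part $b_{2}$ I integrate by parts, picking up $\tfrac12\int(\bar v^{\varepsilon})^{2}\rho\,\operatorname{div}b_{2}$ and $\tfrac12\int(\bar v^{\varepsilon})^{2}\,b_{2}\cdot\nabla\rho$, both controlled by $\big(\|\operatorname{div}b_{2}(t)\|_{\infty}+\big\|\,|b_{2}(t)|/(1+|x|)\,\big\|_{\infty}\big)\int(\bar v^{\varepsilon})^{2}\rho$, the surface term vanishing because $N>d$ and $b_{2}$ is sublinear. Altogether $\frac{d}{dt}\int(\bar v^{\varepsilon})^{2}\rho\le C(t)\int(\bar v^{\varepsilon})^{2}\rho+2\int R^{\varepsilon}\bar v^{\varepsilon}\rho$ with $C\in L^{1}(0,T)$; integrating in $t$, letting $\varepsilon\to0$ (the commutator term disappears, the rest converges by dominated convergence), and invoking Grönwall force $\int\bar v(t)^{2}\rho\,dx\equiv0$. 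Hence $\bar v\equiv0$, that is $\mathbb{E}[u^{2}]=0$ a.e., so $u\equiv0$ and uniqueness follows. For existence I would mollify $b$, solve the smooth equation by stochastic characteristics, and pass to the limit with the uniform bound $\|u^{\varepsilon}(t)\|_{\infty}\le\|u_{0}\|_{\infty}$ and weak-$*$ compactness, the only delicate point being the transport term, handled as in the deterministic theory via $b^{\varepsilon}\cdot\nabla u^{\varepsilon}=\operatorname{div}(b^{\varepsilon}u^{\varepsilon})-u^{\varepsilon}\operatorname{div}b^{\varepsilon}$ and strong $L^{1}_{loc}$-convergence of $b^{\varepsilon}$, $\operatorname{div}b^{\varepsilon}$.

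The main obstacle I anticipate is the rigorous reduction to the parabolic equation together with the justification of the weighted energy identity: one must check that the renormalized equation and its Itô form hold in a sense robust enough to take expectations (a genuine, not merely local, martingale property), that Ambrosio's commutator estimate is valid globally on $\mathbb{R}^{d}$ against the weight $(1+|x|)^{-N}$ — which is where the hypothesis $\int_{0}^{T}\!\int|Db_{t}|(1+|x|)^{-N}\,dx\,dt<\infty$ enters — and that the mollification error and the tail contributions of the weight genuinely vanish in the limit. Once the parabolic structure is secured, absorbing $b_{1}$ into the dissipation is elementary, so the deterministic obstruction (needing $\operatorname{div}b\in L^{\infty}$) really does disappear.
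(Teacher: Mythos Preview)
Your proposal is correct and matches the paper's proof: renormalize, take expectations to obtain the deterministic parabolic equation for $E[\beta(u)]$, and establish uniqueness via a weighted energy estimate with weight $(1+|x|)^{-N}$, absorbing the bounded component $b_1$ directly into the dissipation (this is exactly what replaces the hypothesis $\operatorname{div}b\in L^\infty$) while integrating the sublinear component $b_2$ by parts. The only technical variation is that you mollify $\bar v=E[u^{2}]$ and invoke the commutator lemma on the deterministic equation, whereas the paper mollifies at the level of the SPDE (working with $v_\varepsilon=E[\beta_0(u*\theta_\varepsilon)]$) so as to recycle both the commutator analysis from the renormalization theorem and the pointwise bound of Proposition~\ref{stima epsilon}, which is what controls the tail of the commutator against $(1+|x|)^{-N}$; both orderings lead to the same Gronwall inequality.
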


One can accept a component $b_{1}$ of $b$ which has no $L^{\infty}\left(
\mathbb{R}^{d}\right)  $-control on the divergence. We included the component
$b_{2}$ in the statement to accept linear growth at infinity, but only with
$L^{\infty}$-divergence. In a sense, $b_{1}$ takes care of the irregular part
of $b$ in a bounded ball, $b_{2}$ of the more regular but possibly linear
growth part of $b$ at infinity.

That noise could improve the theory of transport equations was first
discovered by \cite{FGP}. The present work, being based on the same commutator
lemma of the deterministic case, still requires the weak differentiability
assumption (\ref{weak differentiability assumption}). On the contrary, the
approach of \cite{FGP} by stochastic characteristics allows one to get rid of
the weak differentiability of $b$. In this sense the results of \cite{FGP} are
more advanced than the present ones. However, the assumptions here and in
\cite{FGP} are not directly comparable. The main condition assumed in
\cite{FGP} is%
\[
b\in L^{\infty}\left(  0,T;C_{b}^{\alpha}\left(  \mathbb{R}^{d}\right)
\right)
\]
together with a mild integrability of $\operatorname{div}b$. Here we may
consider also discontinuous $b$ in dimension $d>1$ (in dimension 1, assumption
(\ref{weak differentiability assumption}) implies continuity). To clarify, we
give an example in section \ref{section example} which is covered here and not
by \cite{FGP}. The boundedness of $b$ was also important in \cite{FGP} to
investigate the stochastic flow, while here it is easily removed. Moreover,
the approach presented here generalizes more easily to space-dependent noise,
but we do not stress this in this paper.

A part from the technical comparison of assumptions, one of the main purposes
of this note is to describe a completely different reason, with respect to the
one given in \cite{FGP}, that explains \textit{why} noise improves the
deterministic theory. In a sense, the reason explained here is more
\textit{structural}: it may hold true for equations possibly very different
from linear transport ones, but having some common structural features. We
know at present at least another example where it works, namely the system of
infinitely many coupled equations
\begin{equation}
dX_{n}\left(  t\right)  =k_{n-1}X_{n-1}\left(  t\right)  \circ dW_{n-1}\left(
t\right)  -k_{n}X_{n+1}\left(  t\right)  \circ dW_{n}\left(  t\right)
\label{stoch dyadic eq}%
\end{equation}
with $n\geq1$, $X_{0}\left(  t\right)  =0$, $k_{0}=0$, and for instance
$k_{n}=2^{n}$. See \cite{BFM} for details. The proof in \cite{BFM} has much in
common with the one of the present paper, although at that time this
structural fact was not identified.

In a\ few sentences, the reason why Stratonovich multiplicative noise,
sometimes called Stratonovich \textit{bilinear} noise, as that of equations
(\ref{stoch transport eq}) and (\ref{stoch dyadic eq}), produces a
regularization, is the following one. When we pass from Stratonovich to
It\^{o} form, a second order differential opertator $A$ appears (see below its
form for equation (\ref{stoch transport eq}); think to a Laplacian in the
easiest case):
\[
du+\left(  b\cdot\nabla\right)  udt+\sum_{k=1}^{d}\partial_{k}u\circ
dW^{k}=\frac{1}{2}\Delta udt.
\]
This equation is equivalent to (\ref{stoch transport eq}), so there is no
regularizing effect of $\Delta$ (it is fully compensated by the It\^{o} term,
as well understood in the theory of Zakai equation of filtering). A simple way
to see that there is no regularization is to recall that the solution of
(\ref{stoch transport eq}) when $b$ is smooth (see \cite{K}) or like in
\cite{FGP} is given by%
\[
u\left(  t,x\right)  =u_{0}\left(  \varphi_{t}^{-1}\left(  x\right)  \right)
\]
for a properly defined stochastic flow $\varphi_{t}$ of diffeomrphisms, so any
irregularity of $u_{0}$ persists in time. But when we take expected value
(assume the It\^{o} term term is a martingale, thus with zero expected value)
we get the parabolic equation%
\[
\frac{dE\left[  u\right]  }{dt}+\left(  b\cdot\nabla\right)  E\left[
u\right]  =\frac{1}{2}\Delta E\left[  u\right]  .
\]
Here we have a regularizing effect. The expected value $E\left[  u\left(
t,x\right)  \right]  $ is much more regular than $u\left(  t,x\right)  $.

Unfortunately we cannot use so easily this remark to prove uniqueness: if
$u_{0}=0$, by the previous arguments we could only deduce $E\left[  u\left(
t,x\right)  \right]  =0$ (this holds under more general assumptions than those
of theorem \ref{main theorem}), which does not imply $u=0$.

But if we can prove that
\[
d\beta\left(  u\right)  +\left(  b\cdot\nabla\right)  \beta\left(  u\right)
dt+\sum_{k=1}^{d}\partial_{k}\beta\left(  u\right)  \circ dW^{k}=0
\]
for all functions $\beta\in C^{1}\left(  \mathbb{R}\right)  $, then we pass to
It\^{o} form%
\[
d\beta\left(  u\right)  +\left(  b\cdot\nabla\right)  \beta\left(  u\right)
dt+\sum_{k=1}^{d}\partial_{k}\beta\left(  u\right)  dW^{k}=\frac{1}{2}%
\Delta\beta\left(  u\right)
\]
and take expectation%
\begin{equation}
\frac{dE\left[  \beta\left(  u\right)  \right]  }{dt}+\left(  b\cdot
\nabla\right)  E\left[  \beta\left(  u\right)  \right]  =\frac{1}{2}\Delta
E\left[  \beta\left(  u\right)  \right]  .\label{parabolic equation}%
\end{equation}
Playing with positive functions $\beta$, this allows to prove $u=0$. The
advantage with respect to the deterministic case is that now we have the term
$\Delta E\left[  \beta\left(  u\right)  \right]  $, which allows us to prove
$E\left[  \beta\left(  u\right)  \right]  =0$ under more general assumptions
on $b$ than for equation (\ref{beta-transformed}). At present, the weakeness
of this method with respect to \cite{FGP} is that we need to renormalize $u$.

An idea somewhat similar to this one was told to one of the author some time
ago by B. Rozovskii, about a special variant of 3D Navier-Stokes equations.
About this, unfortunately it is clear that one limitation of this approach is
to linear equations, with deterministic coefficient $b$: the expected value
would not commute in more general cases. Indeed, for nonlinear transport-like
problems or linear with random $b$ one can give counterexamples to a claim of
regularization by noise, see \cite{FGP} and \cite{FGP-Levico}. But there are
also positive nonlinear examples, of regularization by bilinear multiplicative
noise, see \cite{BFM}, \cite{FGP-Eulero}. We are also aware of a work in
progress by A. Debussche on a stochastic version of nonlinear Schr\"{o}dinger
equations, where a special multilicative noise has a regularizing effect that
could be similar to what is described here. But each example requires special
ad hoc arguments, at present. So the structural explanation of the present
work is only a hint at the possibility that bilinear multiplicative noise
regularizes, not a general fact.

Let us finally mention that, a posteriori, we notice similarities with the
theory of stabilization by noise developed by Arnold, Crauel, Wihstutz, see
\cite{Arnold-co}, \cite{Arnold}. For a Stratonovich system written in astract
fom as%
\[
dX_{t}=BX_{t}dt+\sum_{k}C_{k}X_{t}\circ dW_{t}^{k}%
\]
the It\^{o} form is
\[
dX_{t}=\left(  B+\sum_{k}C_{k}^{2}\right)  X_{t}dt+\sum_{k}C_{k}X_{t}%
dW_{t}^{k}.
\]
There are cases when $C_{k}^{2}$ is a \textquotedblleft
negative\textquotedblright\ operator (in a sense), like when $C_{k}^{\ast
}=-C_{k}$ and $C_{k}C_{k}^{\ast}$ is positive definite. This is, in a sense,
the case of the first order differential operators $C_{k}=\partial_{k}$. When
$C_{k}^{2}$ are \textquotedblleft negative\textquotedblright, we may expect an
increase of stability, becase again%
\[
\frac{d}{dt}E\left[  X_{t}\right]  =\left(  B+\sum_{k}C_{k}^{2}\right)
E\left[  X_{t}\right]  .
\]
This is what has been proved in \cite{Arnold-co}, \cite{Arnold}, under
suitable assumptions. At the PDE level, $\left(  B+\sum_{k}C_{k}^{2}\right)  $
may be regularizing, when $B$ is not. However, going in more details, one can
prove stabilization only when the trace of $B$ is negative, see
\cite{Arnold-co}, \cite{Arnold}, not in general as the operator $\left(
B+\sum_{k}C_{k}^{2}\right)  $ would suggest. This again shows that the simple
argument about regularization of $E\left[  X_{t}\right]  $ (or $E\left[
u\right]  $ above) is only the signature of a possible but not sure
regularization of the process itself.

\section{Definitions and preliminaries\label{section stoch copy(1)}}

Consider the Stratonovich linear stochastic transport equation
(\ref{stoch transport eq}). To shorten some notation, highlight the structure
and hint at more generality (not treated here), let us define a few
differential operators. For a.e. $t\in\left[  0,T\right]  $, denote by
$A_{t},B_{t},C_{t,k}$ the linear operators from $C_{0}^{\infty}\left(
\mathbb{R}^{d}\right)  $ to $L_{loc}^{1}\left(  \mathbb{R}^{d}\right)  $
defined as
\[
\left(  B_{t}f\right)  \left(  x\right)  =\left(  b\left(  t,x\right)
\cdot\nabla\right)  f\left(  x\right)  ,\qquad\left(  C_{t,k}f\right)  \left(
x\right)  =\partial_{k}f\left(  x\right)
\]%
\[
\left(  A_{t}f\right)  \left(  x\right)  =\frac{1}{2}\sum_{k}C_{t,k}%
C_{t,k}f\left(  x\right)  ,\qquad f\in C_{0}^{\infty}\left(  \mathbb{R}%
^{d}\right)
\]
where, here, $A_{t}f=\Delta f$. Then denote by $A_{t}^{\ast},B_{t}^{\ast
},C_{t,k}^{\ast}$ their formal adjoints, again linear operators from
$C_{0}^{\infty}\left(  \mathbb{R}^{d}\right)  $ to $L_{loc}^{1}\left(
\mathbb{R}^{d}\right)  $, defined as%

\begin{align*}
\left(  B_{t}^{\ast}\varphi\right)  \left(  x\right)   &  =-\left(  b\left(
t,x\right)  \cdot\nabla\right)  \varphi\left(  x\right)  -\varphi\left(
x\right)  \operatorname{div}b\left(  t,x\right) \\
\left(  C_{t,k}^{\ast}\varphi\right)  \left(  x\right)   &  =-\partial
_{k}\varphi\left(  x\right)
\end{align*}%
\[
\left(  A_{t}^{\ast}\varphi\right)  \left(  x\right)  =\sum_{k}C_{t,k}^{\ast
}C_{t,k}^{\ast}\varphi\left(  x\right)  ,\qquad\varphi\in C_{0}^{\infty
}\left(  \mathbb{R}^{d}\right)  .
\]
If $\varphi\in C_{0}^{\infty}\left(  \mathbb{R}^{d}\right)  $ we have
$A_{\cdot}^{\ast}\varphi,B_{\cdot}^{\ast}\varphi,C_{\cdot,k}^{\ast}\varphi\in
L_{loc}^{1}\left(  \left[  0,T\right]  \times\mathbb{R}^{d}\right)  $. The
next definition requires $b,\operatorname{div}b\in L^{1}\left(  0,T;L_{loc}%
^{1}\left(  \mathbb{R}^{d}\right)  \right)  $.

\begin{definition}
\label{definition solution stoch}If $u_{0}\in L^{\infty}\left(  \mathbb{R}%
^{d}\right)  $, we say that a random field $u\left(  t,x\right)  $ is a weak
$L^{\infty}$-solution of equation (\ref{stoch transport eq}) if $u\in
L^{\infty}\left(  \Omega\times\left[  0,T\right]  \times\mathbb{R}^{d}\right)
$ and for all $\varphi\in C_{0}^{\infty}\left(  \mathbb{R}^{d}\right)  $ the
real valued process $s\mapsto\int u_{s}C_{s,k}^{\ast}\varphi dx$ has a
modification which is a continuous adapted semi-martingale and for all
$t\in\left[  0,T\right]  $ we have $P$-a.s.
\[
\int u_{t}\varphi dx+\int_{0}^{t}\left(  \int u_{s}B_{s}^{\ast}\varphi
dx\right)  ds+\sum_{k}\int_{0}^{t}\left(  \int u_{s}C_{s,k}^{\ast}\varphi
dx\right)  \circ dW_{s}^{k}=\int u_{0}\varphi dx.
\]

\end{definition}

A posteriori, form the equation itself, it follows that for all $\varphi\in
C_{0}^{\infty}\left(  \mathbb{R}^{d}\right)  $ the real valued process
$t\mapsto\int u_{t}\varphi dx$ has a continuous modification. We shall always
use it when we write $\int u_{t}\varphi dx$, $\int u_{t}B_{t}^{\ast}\varphi
dx$, $\int u_{t}C_{t,k}^{\ast}\varphi dx$.

The reason for the assumption that $\int u_{s}C_{s,k}^{\ast}\varphi dx$ is a
continuous adapted semi-martingale is that the Stratonovich integrals
\[
\int_{0}^{t}\left(  \int u_{s}C_{s,k}^{\ast}\varphi dx\right)  \circ
dW_{s}^{k}%
\]
are thus well defined and equal to the corresponding It\^{o} integrals plus
half of the joint quadratic variation:%
\[
=\int_{0}^{t}\left(  \int u_{s}C_{s,k}^{\ast}\varphi dx\right)  dW_{s}%
^{k}+\frac{1}{2}\left[  \int u_{\cdot}C_{\cdot,k}^{\ast}\varphi dx,W_{\cdot
}^{k}\right]  _{t}.
\]
Recall, to help the intuition, that (with the notation $X_{s}=\int
u_{s}C_{s,k}^{\ast}\varphi dx$)%
\[
\int_{0}^{t}X_{s}\circ dW_{s}^{k}=\lim_{n\rightarrow\infty}\sum_{t_{i}\in
\pi_{n},t_{i}\leq t}\frac{X_{t_{i+1}\wedge t}+X_{t_{i}}}{2}\left(
W_{t_{i+1}\wedge t}-W_{t_{i}}\right)
\]%
\[
\int_{0}^{t}X_{s}dW_{s}^{k}=\lim_{n\rightarrow\infty}\sum_{t_{i}\in\pi
_{n},t_{i}\leq t}X_{t_{i}}\left(  W_{t_{i+1}\wedge t}-W_{t_{i}}\right)
\]%
\[
\left[  X_{\cdot},W_{\cdot}^{k}\right]  _{t}=\lim_{n\rightarrow\infty}%
\sum_{t_{i}\in\pi_{n},t_{i}\leq t}\left(  X_{t_{i+1}\wedge t}-X_{t_{i}%
}\right)  \left(  W_{t_{i+1}\wedge t}-W_{t_{i}}\right)
\]
where $\pi_{n}$ is a sequence of finite partitions of $\left[  0,T\right]  $
with size $\left\vert \pi_{n}\right\vert \rightarrow0$ and elements
$0=t_{0}<t_{1}<...$, and the limits are in probability, uniformly in time on
compact intervals. Details about these facts can be found in Kunita \cite{K}.

\begin{proposition}
\bigskip\label{proposition Stratonovich Ito}A weak $L^{\infty}$-solution in
the previous Stratonovich sense satisfies the It\^{o} equation%
\[
\int u_{t}\varphi dx+\int_{0}^{t}\left(  \int u_{s}B_{s}^{\ast}\varphi
dx\right)  ds+\sum_{k}\int_{0}^{t}\left(  \int u_{s}C_{s,k}^{\ast}\varphi
dx\right)  dW_{s}^{k}=\int u_{0}\varphi dx+\int_{0}^{t}\left(  \int u_{s}%
A_{s}^{\ast}\varphi dx\right)  ds
\]
for all $\varphi\in C_{0}^{\infty}\left(  \mathbb{R}^{d}\right)  $.
\end{proposition}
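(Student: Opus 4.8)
The plan is to convert each Stratonovich integral in Definition~\ref{definition solution stoch} into the corresponding It\^o integral, the difference being one half of the joint quadratic variation as recalled just above, and to identify that quadratic variation by feeding the equation back into itself. Fix $\varphi\in C_0^\infty(\mathbb{R}^d)$ and set $X_s^k:=\int u_sC_{s,k}^\ast\varphi\,dx=-\int u_s\partial_k\varphi\,dx$; by hypothesis this process has a continuous adapted semimartingale modification, so
\[
\int_0^tX_s^k\circ dW_s^k=\int_0^tX_s^k\,dW_s^k+\frac12\big[X_\cdot^k,W_\cdot^k\big]_t .
\]
Hence the proposition will follow once we show $\tfrac12\sum_k\big[X_\cdot^k,W_\cdot^k\big]_t=-\int_0^t\big(\int u_sA_s^\ast\varphi\,dx\big)\,ds$ and substitute this into the Stratonovich identity of Definition~\ref{definition solution stoch}.

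To compute $[X_\cdot^k,W_\cdot^k]_t$ it suffices to know the local martingale part of the semimartingale $X^k$. We obtain it by applying Definition~\ref{definition solution stoch} with the test function $\psi_k:=-\partial_k\varphi\in C_0^\infty(\mathbb{R}^d)$ in place of $\varphi$. This is legitimate precisely because $C_0^\infty(\mathbb{R}^d)$ is stable under differentiation: the functions $C_{t,j}^\ast\psi_k=\partial_j\partial_k\varphi$ again lie in $C_0^\infty(\mathbb{R}^d)$, so $s\mapsto\int u_sC_{s,j}^\ast\psi_k\,dx$ is a continuous semimartingale (this is exactly the semimartingale assumption of Definition~\ref{definition solution stoch} applied to the test function $-\partial_k\varphi$), and therefore all the Stratonovich integrals in the equation for $\psi_k$ are well defined. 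Since $\int u_t\psi_k\,dx=X_t^k$, that equation reads
\[
X_t^k=\int u_0\psi_k\,dx-\int_0^t\Big(\int u_sB_s^\ast\psi_k\,dx\Big)\,ds-\sum_j\int_0^t\Big(\int u_sC_{s,j}^\ast\psi_k\,dx\Big)\circ dW_s^j .
\]
Writing each Stratonovich integral as an It\^o integral plus one half of a joint quadratic variation, the correction terms have bounded variation, so by uniqueness of the semimartingale decomposition the local martingale part of $X^k$ is $-\sum_j\int_0^t\big(\int u_sC_{s,j}^\ast\psi_k\,dx\big)\,dW_s^j$.

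By independence of the Brownian motions only the $j=k$ term survives in the bracket with $W^k$, so
\[
\big[X_\cdot^k,W_\cdot^k\big]_t=-\int_0^t\Big(\int u_sC_{s,k}^\ast\psi_k\,dx\Big)\,ds=-\int_0^t\Big(\int u_sC_{s,k}^\ast C_{s,k}^\ast\varphi\,dx\Big)\,ds .
\]
Summing over $k$ and using that $A_s^\ast$ is the formal adjoint of $A_s=\tfrac12\sum_kC_{s,k}C_{s,k}$, i.e. $\sum_kC_{s,k}^\ast C_{s,k}^\ast\varphi=2A_s^\ast\varphi$, gives $\tfrac12\sum_k[X_\cdot^k,W_\cdot^k]_t=-\int_0^t\big(\int u_sA_s^\ast\varphi\,dx\big)\,ds$; plugging this back into the Stratonovich weak formulation and rearranging yields the stated It\^o equation. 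The only point requiring care is the self-referential step: one must first check that the processes obtained after differentiating the test function are still semimartingales — guaranteed by the closure of $C_0^\infty(\mathbb{R}^d)$ under $\partial_k$ — before one is allowed to read off the martingale part of $X^k$ from the equation; the remaining manipulations are routine continuous-semimartingale calculus.
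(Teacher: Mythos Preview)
Your argument is correct and is essentially identical to the paper's: both apply the weak formulation with the test function $C_{t,k}^{\ast}\varphi=-\partial_k\varphi$ in place of $\varphi$ to obtain the semimartingale decomposition of $X^k=\int u\,C_{\cdot,k}^{\ast}\varphi\,dx$, read off its martingale part, compute $[X^k,W^k]$, and sum over $k$. The only discrepancy is notational: the paper actually defines $A_t^{\ast}\varphi=\sum_k C_{t,k}^{\ast}C_{t,k}^{\ast}\varphi$ without the factor $\tfrac12$, so your identity $\sum_k C_{s,k}^{\ast}C_{s,k}^{\ast}\varphi=2A_s^{\ast}\varphi$ does not literally match that convention---but this reflects an inconsistency in the paper's own normalisations (compare the $\tfrac12$ in the definition of $A_t$ and in equation~(\ref{eq3})) rather than any flaw in your reasoning.
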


\begin{proof}
We have only to compute $\left[  \int u_{\cdot}C_{\cdot,k}^{\ast}\varphi
dx,W_{\cdot}^{k}\right]  _{t}$. Notice that, by the equation itself,%
\[
\int u_{t}C_{t,k}^{\ast}\varphi dx+\int_{0}^{t}\left(  \int u_{s}B_{s}^{\ast
}C_{t,k}^{\ast}\varphi dx\right)  ds+\sum_{k^{\prime}}\int_{0}^{t}\left(  \int
u_{s}C_{s,k^{\prime}}^{\ast}C_{t,k}^{\ast}\varphi dx\right)  \circ
dW_{s}^{k^{\prime}}=\int u_{0}C_{t,k}^{\ast}\varphi dx.
\]

Thus, by classical rules, easily guessed by the Riemann sum approximations
recalled above, we have
\[
\left[  \int u_{\cdot}C_{\cdot,k}^{\ast}\varphi dx,W_{\cdot}^{k}\right]
_{t}=\int_{0}^{t}\left(  \int u_{s}C_{s,k}^{\ast}C_{t,k}^{\ast}\varphi
dx\right)  ds.
\]
The proof is complete, recalling the definition of $A_{t}^{\ast}$.
\end{proof}

\section{Renormalized solutions}

\begin{definition}
We say that a weak $L^{\infty}$-solution of equation (\ref{stoch transport eq}%
) is renormalized if for every $\beta\in C^{1}\left(  \mathbb{R}\right)  $ the
process $\beta\left(  u\left(  t,x\right)  \right)  $ is a weak $L^{\infty}%
$-solution of the same equation (\ref{stoch transport eq}).
\end{definition}

\begin{definition}
If $v_{0}\in L^{\infty}\left(  \mathbb{R}^{d}\right)  $, we say that $v\in
L^{\infty}\left(  \left[  0,T\right]  \times\mathbb{R}^{d}\right)  $ is a weak
$L^{\infty}$-solution of the PDE%
\[
\frac{\partial v}{\partial t}+b\cdot\nabla v=\frac{1}{2}Av,\qquad
v|_{t=0}=v_{0}%
\]
if
\[
\int v_{t}\varphi dx+\int_{0}^{t}\left(  \int v_{s}B_{s}^{\ast}\varphi
dx\right)  ds=\int v_{0}\varphi dx+\int_{0}^{t}\left(  \int v_{s}A_{s}^{\ast
}\varphi dx\right)  ds
\]
for all $\varphi\in C_{0}^{\infty}\left(  \mathbb{R}^{d}\right)  $.
\end{definition}

\begin{definition}
Let $M$ be a $n\times n$ matrix, and let $\theta\in C^{\infty}_{c}%
(\mathbb{R}^{d})$ such that $\theta\geq0$ and $\int\theta=1$. Define
\[
\Lambda(M,\theta):=\int_{\mathbb{R}^{d}}\left|  \langle Mz,\nabla\theta(z)
\rangle\right|  dz
\]
and
\[
I(\theta):=\int_{\mathbb{R}^{d}}|z||\nabla\theta(z)|dz
\]

\end{definition}

\begin{theorem}
\label{comm. ambrosio} Suppose that $b$ satisfies assumption
(\ref{assumption 1 on b}), that, for a.e. $t\in[0,T]$, $b_{t} \in
BV_{loc}(\mathbb{R}^{d})$ and that, for every compact set $Q\subset
\mathbb{R}^{d}$
\[
\int^{T}_{0} \int_{Q}|Db_{t}|dxdt<\infty
\]
Denote with $D^{s}b$ and $D^{a}b$ the singular and absolutely continuous part
of the measure $Db$ respectively, and with $M_{t}$ the rank one matrix of the
polar decomposition $D^{s}b_{t} u=M_{t}|D^{s} b_{t}|$. Let $u\in L^{\infty
}([0,T]\times\mathbb{R}^{d})$ and $\theta\in C^{\infty}_{c}(\mathbb{R}^{d})$ a
smooth even nonnegative convolution kernel, such that $\operatorname{supp}
\theta\subset B_{1}$. Define $\theta_{\varepsilon}(x)=\varepsilon^{-n}%
\theta(\frac{x}{ \varepsilon})$, $L:=\|u\|_{L^{\infty}([0,T]\times
\mathbb{R}^{d})}$ and
\[
r_{\varepsilon}:=b\cdot\nabla(u\ast\theta_{\varepsilon})-(b\cdot\nabla
u)\ast\theta_{\varepsilon}%
\]
Then, for every compact set $Q\subset\mathbb{R}^{d}$
\begin{align}
\limsup_{\varepsilon\downarrow0}\int^{T}_{0}\int_{Q}|r_{\varepsilon}|dxdt\leq
LI(\theta)|D^{s}b|([0,T]\times Q)
\end{align}
and
\begin{align}
\limsup_{\varepsilon\downarrow0}\int^{T}_{0}\int_{Q}|r_{\varepsilon}|dxdt\leq
L\int^{T}_{0}\int_{Q}\Lambda(M_{t}(x),\theta)d|Db^{s}|(t,x)+L(d+I(\theta
))|D^{a}b|([0,T]\times Q)
\end{align}
Moreover for every $\delta>0$ and vectors $\eta$ and $\zeta$, $\theta$ can be
choosen such that: $\Lambda(\eta\otimes\zeta,\theta)<\delta$.
\end{theorem}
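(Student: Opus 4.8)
The plan is to follow Ambrosio's argument: rewrite the commutator as an explicit difference-quotient integral, understand its $\varepsilon\downarrow0$ behaviour through the decomposition $Db=\nabla b\,\mathcal L^d+M\,|D^sb|$, and treat the absolutely continuous and singular parts separately.

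First I would make the commutator explicit. With the standard interpretation $b\cdot\nabla u:=\operatorname{div}(bu)-u\operatorname{div}b$ (both terms are distributions, since $u\in L^\infty$ and $b,\operatorname{div}b\in L^1_{loc}$), integrating by parts inside the convolution and substituting $y=x-\varepsilon z$ gives
\[
r_\varepsilon(x)=\int_{\mathbb{R}^d}u(x-\varepsilon z)\,\frac{b(x)-b(x-\varepsilon z)}{\varepsilon}\cdot\nabla\theta(z)\,dz+\bigl((u\operatorname{div}b)\ast\theta_\varepsilon\bigr)(x)=:J_\varepsilon(x)+K_\varepsilon(x),
\]
and since $K_\varepsilon\to u\operatorname{div}b$ in $L^1_{loc}$ everything reduces to $J_\varepsilon$ (all for a.e.\ fixed $t$, then integrated in $t$, which is legitimate by the assumed finiteness of $\int_0^T\int_Q|Db_t|$). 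The crude $BV$ inequality $\int_Q|b(x)-b(x-h)|\,dx\le|h|\,|Db|(Q_{|h|})$ gives at once $\int_Q|J_\varepsilon|\,dx\le L\,I(\theta)\,|Db|(Q_\varepsilon)$; the sharpening from $|Db|$ to $|D^sb|$ in the first inequality exploits the cancellation already visible in the formula: for a mollified field $b\ast\rho_\delta$ the classical computation (using $\int z_j\partial_i\theta\,dz=-\delta_{ij}$) shows $J_\varepsilon\to-u\operatorname{div}(b\ast\rho_\delta)$ in $L^1_{loc}$, cancelling $K_\varepsilon$, and since $\operatorname{div}b\in L^1_{loc}$ forces $D^sb$ to be traceless, on $\delta\downarrow0$ the absolutely continuous part of $J_\varepsilon$ is annihilated by $K_\varepsilon$ in the limit, leaving only a term bounded by $L\,I(\theta)\,|D^sb|([0,T]\times Q)$ (via $|\langle Mz,\nabla\theta(z)\rangle|\le|z|\,|\nabla\theta(z)|$, since $|M|=1$).

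The second inequality needs a genuine pointwise blow-up, which is where I expect the main difficulty to lie. At $\mathcal L^d$-a.e.\ point the difference quotient behaves like $(\nabla b)z$ and the same contraction with $\nabla\theta$ bounds the local density of $J_\varepsilon+K_\varepsilon$ by $L(d+I(\theta))|\nabla b|$, producing the $L(d+I(\theta))|D^ab|$ term. At $|D^sb|$-a.e.\ $x_0$ one invokes Alberti's rank-one theorem, already built into the statement: $D^sb=M\,|D^sb|$ with $M=M_t(x_0)=\eta\otimes\zeta$ rank one, $|\eta|=|\zeta|=1$. Blowing up $b$ at $x_0$, the rescalings converge, in the sense carried by $D^sb$, to a one-dimensional profile jumping in the direction $\eta$ across hyperplanes $\{\langle\,\cdot\,,\zeta\rangle=\text{const}\}$; feeding this profile into $J_\varepsilon$ shows that the part of $\int_Q|J_\varepsilon|$ attributable to $|D^sb|$ near $x_0$ is asymptotically at most $L\int|\langle(\eta\otimes\zeta)z,\nabla\theta(z)\rangle|\,dz=L\,\Lambda(M_t(x_0),\theta)$, the weight $\langle\zeta,z\rangle$ recording how much of the jump lies between $x$ and $x-\varepsilon z$. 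A Besicovitch covering argument then turns these two pointwise bounds into the stated integral inequality. The hard part is precisely this blow-up: matching the weak-$\ast$ convergence of the rescaled measures $Db(x_0+\varepsilon\,\cdot\,)$ with $\mathcal L^d$-a.e.\ control of $b$ itself and interchanging $\varepsilon\downarrow0$ with the localization — here I would lean directly on \cite{Ambrosio} and, through it, on Alberti's structure theorem.

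Finally, for the choice of kernel: given $\eta,\zeta$ — and here $\operatorname{div}b\in L^1_{loc}$ makes $M$ traceless, i.e.\ $\eta\perp\zeta$ — pick coordinates with $\zeta=e_1$ and take $\theta(z)=\varphi_\lambda(z_1)\,\psi(z_2,\dots,z_d)$ with $\varphi_\lambda(s)=\lambda\varphi(\lambda s)$ concentrating at $0$. Since $\eta\perp e_1$ there is no $\partial_1$ term in $\partial_\eta\theta=\varphi_\lambda\,\partial_\eta\psi$, so
\[
\Lambda(\eta\otimes\zeta,\theta)=\Bigl(\int|z_1|\,\varphi_\lambda(z_1)\,dz_1\Bigr)\Bigl(\int|\partial_\eta\psi|\Bigr)=\lambda^{-1}\Bigl(\int|s|\,\varphi(s)\,ds\Bigr)\Bigl(\int|\partial_\eta\psi|\Bigr)<\delta
\]
for $\lambda$ large, and a rescaling keeps $\theta$ even, nonnegative and supported in $B_1$. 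The smallness genuinely requires $\eta\perp\zeta$: integration by parts gives $\Lambda(\eta\otimes\zeta,\theta)\ge\bigl|\int\langle\zeta,z\rangle\,\partial_\eta\theta(z)\,dz\bigr|=|\langle\eta,\zeta\rangle|$ for every admissible $\theta$, so the tracelessness of $D^sb$ supplied by the hypothesis $\operatorname{div}b\in L^1_{loc}$ is exactly the structural feature that makes the scheme close.
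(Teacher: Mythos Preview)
The paper does not supply a proof of this theorem: it is stated as a known result imported from Ambrosio's work \cite{Ambrosio} (and, behind it, Alberti's rank-one theorem), and then used as a black box in the proof of Theorem~\ref{theorem renormalized stochastic}. There is therefore nothing in the paper to compare your argument against line by line.

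That said, your sketch is a faithful outline of Ambrosio's original proof: the difference-quotient representation $r_\varepsilon=J_\varepsilon+K_\varepsilon$, the cancellation of the absolutely continuous contribution against $K_\varepsilon$ (which is the $BV$ analogue of the DiPerna--Lions commutator lemma and yields the first, isotropic bound), the blow-up at $|D^sb|$-a.e.\ point using the rank-one structure $M=\eta\otimes\zeta$ to produce the anisotropic estimate, and finally the tensorised kernel $\theta(z)=\varphi_\lambda(z_1)\psi(z')$ that drives $\Lambda(\eta\otimes\zeta,\theta)$ to zero. You are also right to point out --- and the paper's statement is slightly loose on this --- that the smallness of $\Lambda(\eta\otimes\zeta,\theta)$ genuinely requires $\eta\perp\zeta$, since $\Lambda(\eta\otimes\zeta,\theta)\ge|\langle\eta,\zeta\rangle|$ for every admissible kernel; the hypothesis $\operatorname{div}b\in L^1_{loc}$ is precisely what forces $D^sb$ to be trace-free and hence $\eta\perp\zeta$ at $|D^sb|$-a.e.\ point. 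The paper relies on exactly this when it invokes ``Alberti rank one theorem'' at the end of Step~1 of Theorem~\ref{theorem renormalized stochastic} to conclude $g=0$.
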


In the sequel we will need, in addition to the estimate on $\limsup
_{\varepsilon\rightarrow0}\|r_{\varepsilon}\|_{L^{1}}(B_{R})$ given by theorem
\ref{comm. ambrosio}, an estimate on $\|r_{\varepsilon}\|_{L^{1}}$. Therefore
the following proposition will be useful.

\begin{proposition}
\label{stima epsilon} Suppose that $u\in L^{\infty}(\mathbb{R}^{d})$, $b\in
BV_{loc}(\mathbb{R}^{d};\mathbb{R}^{d})$, $\operatorname{div}b\in L^{1}%
_{loc}(\mathbb{R}^{d})$ and $\theta\in C^{\infty}_{c}(\mathbb{R}^{d})$ is a
smooth even nonnegative convolution kernel, such that $\operatorname{supp}
\theta\subset B_{1}$. Then, exists an even convolution kernel $\rho$, with
$\operatorname{supp}\rho\subset B_{1}$ such that, for every measurable
$\varphi$, it holds:
\[
\int|r_{\varepsilon}\varphi|dx\leq C_{\theta}\|u\|_{L^{\infty}(
(\operatorname{supp}\varphi)_{\varepsilon})}|Db|(|\varphi|\ast\rho
_{\varepsilon})
\]
where $(\operatorname{supp}\varphi)_{\varepsilon}=\{x\in\mathbb{R}^{d}:
\operatorname{dist}(x,\operatorname{supp}\varphi)\leq\varepsilon\}$ Therefore,
for a.e. every $x\in\mathbb{R}^{d}$ it holds:
\[
|r_{\varepsilon}|(x)\leq C_{\theta}\|u\|_{L^{\infty}(B(x,\varepsilon
))}(|Db|\ast\rho_{\varepsilon})(x)
\]

\end{proposition}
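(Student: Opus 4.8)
The strategy is to return to the explicit form of the commutator and combine it with the crude (isotropic, non‑sharp) difference–quotient bound for $BV$ functions — not with the anisotropic version underlying Theorem~\ref{comm. ambrosio}, which only gives a $\limsup$ and not an estimate valid for each fixed $\varepsilon$. First I would record the algebraic identity: writing $b\cdot\nabla u=\operatorname{div}(ub)-u\operatorname{div}b$ in the sense of distributions and integrating by parts against $\theta_\varepsilon(x-\cdot)$, one gets for every $x$
\[
r_\varepsilon(x)=\int u(y)\,\bigl(b(x)-b(y)\bigr)\cdot\nabla\theta_\varepsilon(x-y)\,dy+\int u(y)\,\operatorname{div}b(y)\,\theta_\varepsilon(x-y)\,dy .
\]
Since $\nabla\theta_\varepsilon(x-\cdot)$ and $\theta_\varepsilon(x-\cdot)$ are supported in $B(x,\varepsilon)$, each integrand vanishes unless $y\in B(x,\varepsilon)$; when later multiplied by $|\varphi(x)|$ this confines $y$ to $(\operatorname{supp}\varphi)_\varepsilon$, which is the source of the $L^\infty$–factor in the statement.

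Next I would invoke the difference–quotient lemma for $BV$: for $\psi\in C_c(\mathbb{R}^d)$, $\psi\ge0$, and $v\in\mathbb{R}^d$,
\[
\int\psi(x)\,\bigl|b(x+v)-b(x)\bigr|\,dx\le|v|\int_0^1\!\!\int\psi(y-tv)\,d|Db|(y)\,dt ,
\]
proved by applying the chain rule to $b\ast\theta_\delta$, using $|\nabla(b\ast\theta_\delta)|\le|Db|\ast\theta_\delta$, Fubini, and letting $\delta\downarrow0$; together with the trivial bound $|\operatorname{div}b|\le c_d|Db|$ as measures, $c_d$ depending only on $d$. For $\varphi\in C_c^\infty$ I then multiply the identity by $|\varphi(x)|$, bound $|u(y)|$ by $L_\varphi:=\|u\|_{L^\infty((\operatorname{supp}\varphi)_\varepsilon)}$, substitute $y=x-\varepsilon z$ (so $\nabla\theta_\varepsilon(x-y)=\varepsilon^{-d-1}\nabla\theta(z)$), apply Fubini and the lemma for each fixed $z$ (with $\psi=|\varphi|$, $v=\varepsilon z$), and Fubini once more; after the change of variables $w=t\varepsilon z$ the first term becomes $\le L_\varphi\,|Db|(|\varphi|\ast G_\varepsilon)$ with
\[
G(z):=|z|\int_0^1 s^{-d-1}\,|\nabla\theta(z/s)|\,ds ,
\]
which is even, supported in $B_1$, and — by a change of variables in the $s$–integral — satisfies $\int G=I(\theta)$. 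The second (``$\operatorname{div}b$'') term is estimated directly: it is $\le L_\varphi\!\int(|\varphi|\ast\theta_\varepsilon)\,|\operatorname{div}b|\,dx\le c_dL_\varphi\,|Db|(|\varphi|\ast\theta_\varepsilon)$.

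Finally I would put $C_\theta:=I(\theta)+c_d$ and $\rho:=(G+c_d\theta)/C_\theta$, which is nonnegative, even, supported in $B_1$ and has total mass $1$; since $G$ and $\theta$ are rescaled in the same way, $G_\varepsilon+c_d\theta_\varepsilon=C_\theta\rho_\varepsilon$, and adding the two bounds gives $\int|r_\varepsilon\varphi|\,dx\le C_\theta L_\varphi\,|Db|(|\varphi|\ast\rho_\varepsilon)$. The general measurable $\varphi$ follows by a routine approximation (simple functions, truncation). For the pointwise assertion I would take $\varphi=\mathbf{1}_{B(x,\delta)}$, divide by $|B(x,\delta)|$ and let $\delta\downarrow0$: the left‑hand side tends to $|r_\varepsilon(x)|$ at every Lebesgue point of $r_\varepsilon\in L^1_{loc}$, while on the right $(\operatorname{supp}\varphi)_\varepsilon=B(x,\delta+\varepsilon)\downarrow\overline{B(x,\varepsilon)}$ and $|B(x,\delta)|^{-1}(\mathbf{1}_{B(x,\delta)}\ast\rho_\varepsilon)$ converges to $\rho_\varepsilon(x-\cdot)$, so one recovers $|r_\varepsilon|(x)\le C_\theta\|u\|_{L^\infty(B(x,\varepsilon))}(|Db|\ast\rho_\varepsilon)(x)$.

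The commutator algebra and the difference–quotient lemma are classical. The part I expect to require genuine care is the bookkeeping that turns the double integral $\int_0^1\!\int|z|\,|\nabla\theta(z)|\,|\varphi(y+t\varepsilon z)|\,dz\,dt$ into an honest convolution $(|\varphi|\ast\rho_\varepsilon)(y)$ with the correct $\varepsilon^{-d}$ scaling and support in $B_\varepsilon$, and the rigorous passage to the pointwise inequality — in particular the behaviour of $\|u\|_{L^\infty(B(x,\delta+\varepsilon))}$ as $\delta\downarrow0$ and of the mollified indicator tested against the measure $|Db|$.
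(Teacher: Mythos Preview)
Your proposal is correct and follows the same route as the paper: the same splitting of $r_\varepsilon$ into a $\operatorname{div}b$-term (bounded by $c_d\,|Db|(|\varphi|\ast\theta_\varepsilon)$) and a difference term handled by the one-dimensional $BV$ difference-quotient inequality along segments, then repackaged as $|Db|$ tested against a convolution of $|\varphi|$ with an even kernel supported in $B_1$. The only cosmetic difference is that the paper, at the last step, replaces $|z|\,|\nabla\theta(z)|$ by the cruder bound $\|\nabla\theta\|_\infty\mathbf 1_{B_1}(z)$ and thereby obtains the explicit kernel $\rho'_\varepsilon(w)=\varepsilon^{-1}\!\int_0^\varepsilon t^{-d}\mathbf 1_{|w|\le t}\,dt$, whereas you keep $|\nabla\theta|$ and arrive at $G(z)=|z|\int_0^1 s^{-d-1}|\nabla\theta(z/s)|\,ds$; both are even, $L^1$, supported in $B_1$, and your version has the pleasant feature $\int G=I(\theta)$. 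Your derivation of the pointwise bound via $\varphi=\mathbf 1_{B(x,\delta)}$ and Lebesgue points is exactly what underlies the paper's one-line remark that ``the second inequality is an easy consequence of the first one''; the cleanest way to see the right-hand limit is to use the evenness of $\rho$ to rewrite $|Db|(\mathbf 1_{B(x,\delta)}\ast\rho_\varepsilon)=\int_{B(x,\delta)}(|Db|\ast\rho_\varepsilon)(x')\,dx'$ and then apply Lebesgue differentiation to $|Db|\ast\rho_\varepsilon\in L^1_{loc}$.
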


\begin{proof}
First of all note that the second inequality is an easy consequence of the
first one. From the definition of $r_{\varepsilon}$ it follows:
\[%
\begin{split}
\int| r_{\varepsilon}\varphi| dx\leq\int\int\left|  \varphi(x)u(y)\left[
\theta_{\varepsilon}(x-y)\operatorname{div}b(y) + \nabla_{y} \theta
_{\varepsilon}(x-y)\cdot\left(  b(y)-b(x)\right)  \right]  \right|  dydx\\
\leq\int\left|  u(y)\operatorname{div}b(y)\left(  \varphi\ast\theta
_{\varepsilon}\right)  (y)\right|  dy+ \int\left|  \varphi(x)\right|
\int|u(x+\varepsilon z)|\left|  \frac{b(x+\varepsilon z)-b(x)}{\varepsilon
}\cdot\nabla\theta(-z) \right|  dzdx\\
\end{split}
\]
Note that
\[
\int\left|  u(y)\operatorname{div}b(y)\left(  \varphi\ast\theta_{\varepsilon
}\right)  (y)\right|  dy\leq d \|u\|_{L^{\infty}( (\operatorname{supp}%
\varphi)_{\varepsilon})}|Db|(|\varphi\ast\theta_{\varepsilon}|)
\]
and that
\[%
\begin{split}
\int\left|  \varphi(x)\right|  \int|u(x+\varepsilon z)|\left|  \frac
{b(x+\varepsilon z)-b(x)}{\varepsilon}\cdot\nabla\theta(-z) \right|  dzdx\\
=\int_{\mathbb{R}^{d}} |\varphi(x)|\int_{\mathbb{R}^{d}} |u(x+\varepsilon
z)|\left|  \int_{\mathbb{R}}Db(x+tz)(z)\cdot\nabla\theta(-z)\left(  \frac
{1}{\varepsilon}1_{[-\varepsilon,0]}(-t)\right)  \right|  dtdzdx\\
\leq\int\int\int|\varphi(y-tz)||u(y-(\varepsilon-t)z)||Db(y)| |z||\nabla
\theta(-z)|\left(  \frac{1}{\varepsilon}1_{[-\varepsilon,0]}(-t)\right)
dtdzdy\\
\end{split}
\]
Since $\operatorname{supp}\theta\subset B_{1}$, with the change of variable
$r=zt$, we obtain:
\[%
\begin{split}
\int\int|z||\nabla\theta(-z)||\varphi(y-tz)|\left(  \frac{1}{\varepsilon
}1_{[-\varepsilon,0]}(-t)\right)  dtdz\\
\leq\|\nabla\theta\|_{\infty}\frac{1}{\varepsilon}\int^{\varepsilon}_{0}
\frac{1}{t^{d}}\int_{r\in B(0,t)}|\varphi(y-r)|drdt=C_{\theta}|\varphi
|\ast\rho^{^{\prime}}_{\varepsilon}(y)
\end{split}
\]
where $\rho^{^{\prime}}_{\varepsilon}(z)=\frac{1}{\varepsilon}\int
^{\varepsilon}_{0}\frac{1}{t^{d}}1_{|z|\leq t}dt$ is (up to a constant
independent of $\varepsilon$) an $L^{1}$ convolution kernel, with support
contained in $B_{\varepsilon}$. Therefore we have proved
\[
\int\left|  \varphi(x)\right|  \int|u(x+\varepsilon z)|\left|  \frac
{b(x+\varepsilon z)-b(x)}{\varepsilon}\cdot\nabla\theta(-z) \right|  dzdx \leq
C_{\theta}\|u\|_{L^{\infty}( (\operatorname{supp}\varphi)_{\varepsilon}%
)}|Db|(|\varphi|\ast\rho^{^{\prime}}_{\varepsilon})
\]
So, defining $\rho_{\varepsilon}=\frac{\theta_{\varepsilon}+\rho^{^{\prime}%
}_{\varepsilon}}{2}$ the proof is complete.
\end{proof}


\begin{theorem}
\label{theorem renormalized stochastic}Suppose that $b$ satisfies assumption
(\ref{assumption 1 on b}), that, for a.e. $t\in[0,T]$, $b_{t} \in
BV_{loc}(\mathbb{R}^{d})$ and that, for every compact set $Q\subset
\mathbb{R}^{d}$
\[
\int^{T}_{0} \int_{Q}|Db_{t}|dxdt<\infty
\]
Then all weak $L^{\infty}$-solution are renormalized and, for any given
$\beta\in C^{1}\left(  \mathbb{R}\right)  $, the function%
\[
v\left(  t,x\right)  =E\left[  \beta\left(  u\left(  t,x\right)  \right)
\right]
\]
is a weak $L^{\infty}$-solution of the equation%
\[
\frac{\partial v}{\partial t}+b\cdot\nabla v=\frac{1}{2}Av,\qquad
v|_{t=0}=\beta\left(  u_{0}\right)  .
\]

\end{theorem}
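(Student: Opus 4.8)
The plan is to prove renormalizability by the classical DiPerna–Lions regularization scheme adapted to the stochastic setting, and then derive the parabolic equation for $v$ by passing to Itô form and taking expectations. First I would fix a weak $L^\infty$-solution $u$ and a mollifier $\theta_\varepsilon$ as in Theorem~\ref{comm. ambrosio}, and set $u^\varepsilon := u \ast \theta_\varepsilon$. Testing the Stratonovich equation (Definition~\ref{definition solution stoch}) against the shifted kernel $\theta_\varepsilon(x-\cdot)$ and using the self-adjointness properties of the operators, one sees that $u^\varepsilon$ satisfies, for each fixed $x$, an Itô-type stochastic equation in $t$ of the form
\[
du^\varepsilon + (b\cdot\nabla u^\varepsilon)\,dt + \sum_k \partial_k u^\varepsilon \, dW^k = \Bigl(\tfrac12 \Delta u^\varepsilon - r_\varepsilon\Bigr) dt + \tfrac12 \text{(commutator from the noise)},
\]
where $r_\varepsilon = b\cdot\nabla u^\varepsilon - (b\cdot\nabla u)\ast\theta_\varepsilon$ is precisely the Ambrosio commutator, and the noise commutator $\partial_k u^\varepsilon - (\partial_k u)\ast\theta_\varepsilon$ vanishes identically because $\partial_k$ commutes with convolution — this is the crucial structural simplification: the transport-by-noise term produces no commutator error. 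Since $u^\varepsilon$ is now a smooth (in $x$) semimartingale, I can apply the Itô–Wentzell formula (Kunita \cite{K}) to compute $d\beta(u^\varepsilon)$ for $\beta \in C^1$; the first-order-in-$x$ structure of $C_{t,k}=\partial_k$ ensures the Stratonovich chain rule holds for the noise term, so
\[
d\beta(u^\varepsilon) + (b\cdot\nabla)\beta(u^\varepsilon)\,dt + \sum_k \partial_k\beta(u^\varepsilon)\circ dW^k = \beta'(u^\varepsilon)\, r_\varepsilon^{-}\,dt
\]
for a suitable remainder $r_\varepsilon^{-}$ (the commutator defect, possibly sign-adjusted).

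Next I would send $\varepsilon \downarrow 0$. The key analytic input is Theorem~\ref{comm. ambrosio} together with the last sentence of its statement: by choosing $\theta$ adapted to the (rank-one) polar direction $M_t(x)$ of the singular part $D^s b$, the quantity $\Lambda(M_t(x),\theta)$ can be made arbitrarily small, so $\limsup_{\varepsilon}\int_0^T\int_Q |r_\varepsilon|\,dx\,dt$ is forced to zero on every compact $Q$; hence $r_\varepsilon \to 0$ in $L^1_{loc}([0,T]\times\mathbb{R}^d)$ along a good sequence. Meanwhile $u^\varepsilon \to u$ and $\beta(u^\varepsilon)\to\beta(u)$ in $L^p_{loc}$ for every finite $p$ (boundedly in $L^\infty$, by dominated convergence), $\beta'(u^\varepsilon) \to \beta'(u)$ boundedly, and $\beta'(u^\varepsilon) r_\varepsilon \to 0$ in $L^1_{loc}$ since $\beta'$ is bounded on the range of $u$. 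Testing the $\beta(u^\varepsilon)$-equation against $\varphi \in C_0^\infty(\mathbb{R}^d)$, each deterministic term converges by these $L^1_{loc}$ convergences; for the stochastic integral term $\int_0^t (\int \beta(u^\varepsilon_s)C_{s,k}^\ast\varphi\,dx)\circ dW_s^k$ I would first rewrite it in Itô form (it equals the Itô integral plus half the bracket, which by the computation in Proposition~\ref{proposition Stratonovich Ito} is itself an explicit Lebesgue integral), then pass to the limit using the Itô isometry on the martingale part — here the uniform $L^\infty$ bound gives the $L^2$ domination needed — and the $L^1$ convergence on the bracket part. This yields that $\beta(u)$ satisfies Definition~\ref{definition solution stoch}, i.e. $u$ is renormalized; one also checks the semimartingale/continuity requirement for $s\mapsto\int\beta(u_s)C_{s,k}^\ast\varphi\,dx$ is inherited in the limit as a uniform limit of continuous semimartingales.

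For the second assertion, given that $\beta(u)$ is a weak $L^\infty$-solution, Proposition~\ref{proposition Stratonovich Ito} says it satisfies the Itô equation
\[
\int\beta(u_t)\varphi\,dx + \int_0^t\!\!\Bigl(\int\beta(u_s)B_s^\ast\varphi\,dx\Bigr)ds + \sum_k\int_0^t\!\!\Bigl(\int\beta(u_s)C_{s,k}^\ast\varphi\,dx\Bigr)dW_s^k = \int\beta(u_0)\varphi\,dx + \int_0^t\!\!\Bigl(\int\beta(u_s)A_s^\ast\varphi\,dx\Bigr)ds.
\]
Taking expectations, the Itô stochastic integral is a genuine martingale (its integrand is bounded, by $u\in L^\infty(\Omega\times[0,T]\times\mathbb{R}^d)$ and $\beta$ continuous, times a fixed test function), hence has zero expectation; Fubini then moves $E$ inside the Lebesgue integrals, giving exactly the weak formulation of $\partial_t v + b\cdot\nabla v = \tfrac12 A v$ with $v_0 = \beta(u_0)$ for $v(t,x)=E[\beta(u(t,x))]$ — and $v\in L^\infty([0,T]\times\mathbb{R}^d)$ with $\|v\|_\infty \le \sup_{|s|\le L}|\beta(s)|$. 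The main obstacle I anticipate is the passage to the limit in the stochastic term while only controlling $r_\varepsilon$ in $L^1_{loc}$ (not pointwise-dominated uniformly), which is why Proposition~\ref{stima epsilon} is flagged as needed in the excerpt: it provides the extra bound $|r_\varepsilon|(x) \le C_\theta\|u\|_{L^\infty(B(x,\varepsilon))}(|Db|\ast\rho_\varepsilon)(x)$, giving an $\varepsilon$-uniform $L^1$-in-space-and-time control of $r_\varepsilon$ on the support region, which upgrades the convergence of $\beta'(u^\varepsilon)r_\varepsilon$ and legitimizes the limit in the bracket/martingale terms; assembling this carefully, rather than any single estimate, is the technical heart of the argument.
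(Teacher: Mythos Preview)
Your overall architecture is right, and the second half (Stratonovich $\to$ It\^o via Proposition~\ref{proposition Stratonovich Ito}, then take expectation) matches the paper exactly. The gap is in the renormalization step, specifically in your claim that by choosing $\theta$ adapted to the polar direction $M_t(x)$ one forces $\limsup_{\varepsilon}\int_0^T\int_Q |r_\varepsilon|\,dx\,dt = 0$, hence $r_\varepsilon \to 0$ in $L^1_{loc}$. This is false for $BV$ drifts and is precisely the obstruction that distinguishes the Ambrosio argument from DiPerna--Lions. The matrix $M_t(x)$ varies with $(t,x)$, so no single kernel $\theta$ makes $\Lambda(M_t(x),\theta)$ small everywhere at once; and the second estimate in Theorem~\ref{comm. ambrosio} carries the term $L(d+I(\theta))|D^a b|([0,T]\times Q)$, which is bounded below independently of $\theta$ (note $I(\theta)\geq d$ for any admissible kernel). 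So $\|r_\varepsilon\|_{L^1(Q)}$ does \emph{not} tend to zero in general, and your downstream claim that $\beta'(u^\varepsilon)r_\varepsilon \to 0$ in $L^1_{loc}$ simply because $\beta'$ is bounded collapses.

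The paper's remedy is the Ambrosio defect-measure trick, and it hinges on an observation you do not make: the limit of $\beta'(u^{\varepsilon_n})r^{\varepsilon_n}$ (which exists as a signed measure $\sigma$ along subsequences, thanks to the uniform $L^1$ bound from Proposition~\ref{stima epsilon}) is \emph{independent of the choice of $\theta$}, because it equals the limit of the left-hand side of the tested equation, which involves no $\theta$. One then runs the two estimates of Theorem~\ref{comm. ambrosio} for \emph{each} $\theta$ separately: the first shows $\sigma$ is singular with respect to Lebesgue measure (so the $|D^a b|$ term is irrelevant for $\sigma$); the second then yields $|\sigma| \leq C\,\Lambda(M_t(x),\theta)\,|D^s b|$ for every $\theta$. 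Taking a countable $W^{1,1}$-dense family of kernels and using Alberti's rank-one theorem to get $\inf_\theta \Lambda(M_t(x),\theta)=0$ pointwise, one concludes $\sigma=0$. Your proposal has all the right ingredients listed but applies them in an order that does not work; the kernel-independence of $\sigma$ is the missing idea.
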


\begin{proof}
\textbf{Step 1} Let $u$ be a weak $L^{\infty}$ solution of equation
(\ref{stoch transport eq}). Let $\theta\in C^{\infty}_{c}(\mathbb{R}^{d})$ be
a even smooth convolution kernel, and define $\theta_{\varepsilon}%
(x):=\frac{1}{\varepsilon^{d}}\theta(\frac{x}{\varepsilon})$ and
$u_{t}^{\varepsilon}=u\ast\theta_{\varepsilon}$. Fix $y\in\mathbb{R}^{d}$, and
consider the test function $\varphi(\cdot)=\theta_{\varepsilon}(y-\cdot)$.
From the definition of week $L^{\infty}$ solution, we have:
\[
u^{\varepsilon}_{t}(y)-\int^{t}_{0} (u_{s}\operatorname{div}b_{s})\ast
\theta_{\varepsilon}(y)+(u_{t}b_{t})\ast\nabla\theta_{\varepsilon}%
(y)ds+\sum_{k=1}^{d}\int^{t}_{0}D_{k} u_{s}^{\varepsilon}(y)\circ dW^{k}%
_{s}=u^{\varepsilon}_{0}(y)
\]
Therefore, differentiating and multiplying for $\beta^{^{\prime}%
}(u^{\varepsilon}_{t})$ it holds a.s. in the sense of the distributions on
$[0,T]\times\mathbb{R}^{d}$,
\[
\frac{d}{dt}\beta(u^{\varepsilon}_{t})(y)+ b(y)\cdot\nabla\beta(u_{t}%
^{\varepsilon})(y)+\beta^{^{\prime}}(u^{\varepsilon}_{t})(y)r^{\varepsilon
}_{t}(y)+\sum_{k=1}^{d}D_{k} \beta^{^{\prime}}(u_{t}^{\varepsilon})(y)\circ
dW^{k}_{s}=0
\]
where $r^{\varepsilon}_{t}:=(b_{t}\cdot\nabla u_{t})\ast\theta_{\varepsilon
}-b_{t}\cdot\nabla( u_{t}\ast\theta_{\varepsilon})\in L^{1}_{loc}%
([0,T]\times\mathbb{R}^{d})$. So, for any $\varphi\in C^{\infty}%
_{c}(\mathbb{R}^{d})$ we have
\begin{equation}
\label{eq rinormal}%
\begin{split}
\int\beta(u^{\varepsilon}_{t})\varphi(x)dx-\int^{t}_{0}\left(  \int
\beta(u^{\varepsilon}_{s})\left[  \operatorname{div}b_{s}\varphi+b_{s}%
\cdot\nabla\varphi\right]  dx\right)  ds\\
-\sum_{k=1}^{d}\int^{t}_{0}\int\beta(u_{s}^{\varepsilon})D_{k}\varphi dx\circ
dW^{k}_{s}-\int\beta(u^{\varepsilon}_{0})\varphi dx=-\int^{t}_{0}\int
\varphi\beta^{^{\prime}}(u^{\varepsilon}_{s})r^{\varepsilon}_{s}dxds
\end{split}
\end{equation}
From the definition $u^{\varepsilon}:=u\ast\theta_{\varepsilon}$ it follows
$\beta(u^{\varepsilon}_{t})\rightarrow\beta(u_{t})$ in $L^{p}(\Omega
\times[0,T]\times B_{R})$ for every $p\in[1,\infty)$ and every $R>0$. Moreover
$\beta(u^{\varepsilon}_{t})\rightarrow\beta(u_{t})$ for a.e. $(\omega,t,x)$.
Therefore, for any sequence $\varepsilon_{n}\rightarrow0$ it is possible to
extract a subsequence still denoted by $\varepsilon_{n}$, such that for a.e.
$\omega\in\Omega$ the left hand side converge to
\[
\int\beta(u_{t})\varphi(x)dx-\int^{t}_{0}\left(  \int\beta(u_{s})\left[
\operatorname{div}b_{s}\varphi+b_{s}\cdot\nabla\varphi\right]  dx\right)  ds
-\sum_{k=1}^{d}\int^{t}_{0}\int\beta(u_{s})D_{k}\varphi dx\circ dW^{k}%
_{s}-\int\beta(u_{0})\varphi dx
\]
Therefore for a.e. $\omega\in\Omega$, $\beta^{^{\prime}}(u^{\varepsilon_{n}%
}_{t})r^{\varepsilon_{n}}_{t}$, which is uniformly bounded in $L^{1}%
([0,T]\times B_{R})$, converge to a signed measure $\sigma$ with finite total
variation on $[0,T]\times B_{R}$. So, to show that $u$ is a renormalized
solution it is sufficient to show that a.s. $\sigma=0$ on $[0,T]\times B_{R}$.
Note that the limit of the left hand side of equation (\ref{eq rinormal}) does
not depend on the choice of $\theta$ and therefore $\sigma$ does not depend on
the choice of $\theta$. Thanks to the first estimate of theorem
\ref{comm. ambrosio}, and to the boundness of $\beta^{^{\prime}}%
(u^{\varepsilon_{n}})$, $\sigma$ is a.s. singular with respect to the $d+1$
dimensional Lebesgue measure. Moreover thanks to the second estimate of
theorem \ref{comm. ambrosio}, and to the fact that $\sigma$ is singular, we
have the estimate:
\[
|\sigma|\leq\|\beta^{^{\prime}}(u)\|_{\infty}\|u\|_{\infty}\Lambda
(M_{t}(x),\theta)d|Db^{s}|(t,x)
\]
Let $g$ be the Radon-Nykodym derivative of $\sigma$ with respect to $|D^{s}%
b|$. It holds, for every smooth even nonnegative convolution kernel $\theta$,
$g\leq\|\beta^{^{\prime}}(u)\|_{\infty}\|u\|_{\infty}\Lambda(M_{t},\theta)$,
$|D^{s}b|$-a.e. Let $D\subset C^{\infty}_{c}(B_{1})$ be a countable set, dense
with respect to the norm $W^{1,1}(B_{1})$, in the set:
\[
R:=\left\{  \theta\in W^{1,1}(B_{1}):\theta\geq0,\,\theta(x)=\theta(-x)\,
\forall x\in\mathbb{R}^{d},\,\int\theta=1\right\}
\]
Being $D$ countable it holds $g(t,x)\leq\|\beta^{^{\prime}}(u)\|_{\infty
}\|u\|_{\infty}\inf_{\theta\in D}\Lambda(M_{t}(x),\theta)$ for $|D^{s}b|$-a.e.
$(t,x)$ and being $D$ dense it holds also%
\[
g(t,x)\leq\|\beta^{^{\prime}}(u)\|_{\infty}\|u\|_{\infty}\inf_{\theta\in
R}\Lambda(M_{t},\theta)
\]
for $|D^{s}b|$-a.e. $(t,x)$ Thanks to Alberti rank one theorem we know that
$M_{t}$ has rank one, and so $g=0$ and $|\sigma|=0$. \newline\textbf{Step 2}.
Thanks to the previous step it holds a.s. and for every $t\in[0,T]$ and
$\varphi\in C^{\infty}_{c}(\mathbb{R}^{d})$
\begin{equation}
\label{eq2}%
\begin{split}
\int\beta(u_{t})\varphi(x)dx-\int^{t}_{0}\left(  \int\beta(u_{s})\left[
\operatorname{div}b_{s}\varphi+b_{s}\cdot\nabla\varphi\right]  dx\right)  ds\\
-\sum_{k=1}^{d}\int^{t}_{0}\int\beta(u_{s})D_{k}\varphi dx\circ dW^{k}%
_{s}-\int\beta(u_{0})\varphi dx=0
\end{split}
\end{equation}
Applying proposition \ref{proposition Stratonovich Ito} and taking the mean value we
obtain that $v=E[\beta(u)]$ satisfies
\begin{equation}
\label{eq3}\int v_{t}\varphi(x)dx-\int^{t}_{0}\left(  \int v_{s}\left[
\operatorname{div}b_{s}\varphi+b_{s}\cdot\nabla\varphi\right]  dx\right)  ds
-\frac{1}{2}\int^{t}_{0}\int v_{s}\Delta\varphi dxds-\int v_{0}\varphi dx=0
\end{equation}
The proof is complete.
\end{proof}

\section{Proof of theorem \ref{main theorem}}

Notice that only here the strict ellipticity of the operator $\Delta$ is used
(or the non-degeneracy assumption of the coefficients $\left(  \sigma
^{k}\right)  $ in the last section), since we need parabolic regularization to
prove uniqueness without the assumption $\operatorname{div}b\left(
t,\cdot\right)  \in L^{\infty}\left(  \mathbb{R}^{d}\right)  $.

Let us make more precise a detail that was not said in the introduction. When
we say that two weak $L^{\infty}$-solutions coincide, we mean they are in the
same equivalence class of $L^{\infty}\left(  \Omega\times\left[  0,T\right]
\times\mathbb{R}^{d}\right)  $. It follows that, for every $\varphi\in
C_{0}^{\infty}\left(  \mathbb{R}^{d}\right)  $, the continuous processes $\int
u_{t}\varphi dx$ of definition \ref{definition solution stoch}\ are
indistinguishable. Let us split the proof in a few steps.

\textbf{Step 1} (from the SPDE to a parabolic PDE). Call $u$ the difference of
two solutions. It is a weak $L^{\infty}$-solution with zero initial condition.
By theorem \ref{theorem renormalized stochastic}, $u$ is renormalized and,
given $\beta_{0}\in C^{1}$, the function%
\[
v\left(  t,x\right)  =E\left[  \beta_{0}\left(  u\left(  t,x\right)  \right)
\right]
\]
is a weak $L^{\infty}$ solution of the equation%
\[
\frac{\partial v}{\partial t}+b\cdot\nabla v=\frac{1}{2}\Delta v.
\]
Choose $\beta_{0}$ such that $\beta_{0}\left(  0\right)  =0$, so $v|_{t=0}=0$,
and $\beta_{0}\left(  u\right)  >0$ for $u\neq0$. If we prove that $v_{t}=0$,
we have proved $u_{t}=0$, $P$-a.s. This easily implies that $u$ is the zero
element of $L^{\infty}\left(  \Omega\times\left[  0,T\right]  \times
\mathbb{R}^{d}\right)  $, which is our claim.

\textbf{Step 2}. (uniqueness for the parabolic equation). Define
$v_{\varepsilon}=E[\beta_{0}(u^{\varepsilon})]$ and $r^{\varepsilon}%
_{t}:=(b_{t}\cdot\nabla u_{t})\ast\theta_{\varepsilon}-b_{t}\cdot\nabla(
u_{t}\ast\theta_{\varepsilon})$. From the proof of the previous theorem we
know that $v_{\varepsilon}\rightarrow v$ a.e. and in $L^{p}_{loc}([0,T]\times
B_{R})$ for every $p\in[1,\infty)$ and every $R>0$. Therefore, to prove $v=0$,
it is sufficient to prove that, $\int\varphi v^{2}_{\varepsilon}%
dx\rightarrow0$ for a smooth and positive function $\varphi$. We will consider
the function
\[
\varphi\left(  x\right)  =\left(  1+\left\vert x\right\vert \right)  ^{-N}%
\]
where $N>d$ is the number given in the assumptions of the theorem. Note that
it holds
\[
\nabla\varphi\left(  x\right)  =-N\left(  1+\left\vert x\right\vert \right)
^{-N-1}\frac{x}{\left\vert x\right\vert }%
\]
hence%
\[
\left(  1+\left\vert x\right\vert \right)  \left\vert \nabla\varphi\left(
x\right)  \right\vert \leq N\left\vert \varphi\left(  x\right)  \right\vert
\]
From identity (\ref{eq rinormal}) of the previous theorem, using proposition
\ref{proposition Stratonovich Ito}, taking the mean value and then differentiating
and multiplying by $2v^{\varepsilon}$, we have
\begin{equation}
\label{identity time derivative}\frac{d}{dt}\int\varphi\left\vert
v_{\varepsilon}\right\vert ^{2}=-2\int\varphi v_{\varepsilon}b\cdot\nabla
v_{\varepsilon}+\int\varphi v_{\varepsilon}\Delta v_{\varepsilon}-2\int\varphi
v_{\varepsilon}E\left[  \beta_{0}^{^{\prime}}(u^{\varepsilon}_{t}%
)r^{\varepsilon}_{t}\right]
\end{equation}
for every $\varphi\in C^{\infty}_{c}(\mathbb{R}^{d})$. Using the boundedness
of $v_{\varepsilon},\nabla v_{\varepsilon},\Delta v_{\varepsilon}$ and the
integrability over $\mathbb{R}^{d}$ of $\varphi$ (see also the next step for
the finiteness of the term $\int\varphi v_{\varepsilon}E\left[  \beta
_{0}^{^{\prime}}(u^{\varepsilon}_{t})r^{\varepsilon}_{t}\right]  $) it is easy
to see that equation (\ref{identity time derivative}) holds for $\varphi
(x)=\left(  1+\left\vert x\right\vert \right)  ^{-N}$. Moreover%

\begin{align*}
\int\varphi v_{\varepsilon}\Delta v_{\varepsilon}  &  =-\int\varphi\left\vert
\nabla v_{\varepsilon}\right\vert ^{2}-\int v_{\varepsilon}\nabla\varphi
\cdot\nabla v_{\varepsilon}\leq-\int\varphi\left\vert \nabla v_{\varepsilon
}\right\vert ^{2}+N\int\left\vert v_{\varepsilon}\right\vert \left\vert
\varphi\right\vert \left\vert \nabla v_{\varepsilon}\right\vert \\
&  \leq-\frac{1}{2}\int\varphi\left\vert \nabla v_{\varepsilon}\right\vert
^{2}+\frac{N^{2}}{2}\int\left\vert v_{\varepsilon}\right\vert ^{2}\left\vert
\varphi\right\vert
\end{align*}%
\[
\int\varphi v_{\varepsilon}b\cdot\nabla v_{\varepsilon}=\int\varphi
v_{\varepsilon}b_{1}\cdot\nabla v_{\varepsilon}+\int\varphi v_{\varepsilon
}b_{2}\cdot\nabla v_{\varepsilon}%
\]%
\[
-2\int\varphi v_{\varepsilon}b_{1}\cdot\nabla v_{\varepsilon}\leq2\left\Vert
b_{1}\left(  t\right)  \right\Vert _{L^{\infty}\left(  \mathbb{R}^{d}\right)
}\int\varphi\left\vert v_{\varepsilon}\right\vert \left\vert \nabla
v_{\varepsilon}\right\vert \leq\frac{1}{4}\int\varphi\left\vert \nabla
v_{\varepsilon}\right\vert ^{2}+C\left\Vert b_{1}\left(  t\right)  \right\Vert
_{L^{\infty}\left(  \mathbb{R}^{d}\right)  }^{2}\int\varphi\left\vert
v_{\varepsilon}\right\vert ^{2}dx
\]%
\begin{align*}
-2\int\varphi v_{\varepsilon}b_{2}\cdot\nabla v_{\varepsilon}  &
=-\int\varphi b_{2}\cdot\nabla v_{\varepsilon}^{2}=\int v_{\varepsilon}%
^{2}b_{2}\cdot\nabla\varphi+\int v_{\varepsilon}^{2}\varphi\operatorname{div}%
b_{2}\\
&  \leq\left\Vert \frac{b_{2}\left(  t\right)  }{1+\left\vert x\right\vert
}\right\Vert _{L^{\infty}\left(  \mathbb{R}^{d}\right)  }^{2}\int
v_{\varepsilon}^{2}\left(  1+\left\vert x\right\vert \right)  \left\vert
\nabla\varphi\left(  x\right)  \right\vert dx+\left\Vert \operatorname{div}%
b_{t}\right\Vert _{L^{\infty}\left(  \mathbb{R}^{d}\right)  }\int
v_{\varepsilon}^{2}\varphi dx\\
&  \leq\left(  \left\Vert \frac{b_{2}\left(  t\right)  }{1+\left\vert
x\right\vert }\right\Vert _{L^{\infty}\left(  \mathbb{R}^{d}\right)  }%
^{2}+\left\Vert \operatorname{div}b_{t}\right\Vert _{L^{\infty}\left(
\mathbb{R}^{d}\right)  }\right)  N\int v_{\varepsilon}^{2}\varphi dx.
\end{align*}
Summarizing,%
\[
\frac{d}{dt}\int\varphi\left\vert v_{\varepsilon}\right\vert ^{2}+\frac{1}%
{4}\int\varphi\left\vert \nabla v_{\varepsilon}\right\vert ^{2}\leq
C_{N}\alpha\left(  t\right)  \int\left\vert v_{\varepsilon}\right\vert
^{2}\varphi dx+C\left\Vert v\right\Vert _{L^{\infty}\left(  \left[
0,T\right]  \times\mathbb{R}^{d}\right)  }\int\varphi E\left[  \left|
\beta_{0}^{^{\prime}}(u^{\varepsilon}_{t})r^{\varepsilon}_{t}\right|  \right]
dx
\]
where%
\[
\alpha\left(  t\right)  :=\left\Vert b_{1}\left(  t\right)  \right\Vert
_{L^{\infty}\left(  \mathbb{R}^{d}\right)  }^{2}+\left\Vert \frac{b_{2}\left(
t\right)  }{1+\left\vert x\right\vert }\right\Vert _{L^{\infty}\left(
\mathbb{R}^{d}\right)  }^{2}+\left\Vert \operatorname{div}b_{t}\right\Vert
_{L^{\infty}\left(  \mathbb{R}^{d}\right)  }%
\]
is integrable. By Gronwall lemma and the result of the next step we deduce
\[
\lim_{\varepsilon\rightarrow0}\int\varphi\left(  x\right)  \left\vert
v_{\varepsilon}\left(  t,x\right)  \right\vert ^{2}dx=0
\]
for all $t\in\left[  0,T\right]  $, and thus $v=0$.

\textbf{Step 3}. It remains to show that
\[
\int^{T}_{0}\int(1+|x|)^{-N}E\left[  \left|  \beta^{^{\prime}}(u^{\varepsilon
}_{t})r^{\varepsilon}_{t}\right|  \right]  dxdt\rightarrow0
\]
First of all note that, given a convolution kernel $\rho_{\varepsilon}$, for
$\varepsilon$ sufficiently small it holds $\frac{1}{(1+|x|)^{N}}\ast
\rho_{\varepsilon}\leq2 \frac{1}{(1+|x|)^{N}}$. Therefore we can apply
proposition \ref{stima epsilon} and we obtain:
\begin{align}
\int^{T}_{0}\int_{|x|\geq R}(1+|x|)^{-N}|\beta^{^{\prime}}(u^{\varepsilon
})r_{\varepsilon}|dxdt\leq C\|\beta^{^{\prime}}(u)\|_{\infty}\|u\|_{\infty
}\int^{T}_{0}\int_{|x|\geq R}\frac{|Db_{t}|}{(1+|x|)^{N}}dxdt<\infty
\end{align}
Since
\[
\lim_{R\rightarrow+\infty} \int^{T}_{0}\int_{|x|\geq R-1}\frac{|Db|}%
{(1+|x|)^{N}}dxdt=0
\]
it is sufficient to prove that, for every $R>0$
\[
\lim_{\varepsilon\rightarrow0}\int^{T}_{0}\int_{B_{R}} E\left[  \left|
\beta^{^{\prime}}(u^{\varepsilon}_{t})r^{\varepsilon}_{t}\right|  \right]
dxdt= 0
\]
Thanks to the bound $\int^{T}_{0}\int_{B_{R}}\left|  \beta^{^{\prime}%
}(u^{\varepsilon}_{t})r^{\varepsilon}_{t}\right|  dxdt\leq C\|\beta^{^{\prime
}}(u)\|_{\infty}\|u\|_{\infty}\int^{T}_{0}\int_{B_{R+1}}|Db_{t}|dxdt$, which
holds a.s. for proposition \ref{stima epsilon}, is sufficient to prove that
for any sequence $\varepsilon_{n}$ exists a subsequence, still denoted by
$\varepsilon_{n}$ such that a.s. $\int^{T}_{0}\int_{B_{R}}\left|
\beta^{^{\prime}}(u^{\varepsilon}_{t})r^{\varepsilon}_{t}\right|
dxdt\rightarrow0$. This follows from the proof of the previous theorem, where
it was proved that, for every sequence $\varepsilon_{n}$ there exist a
subsequence still denoted by $\varepsilon_{n}$ such that $\beta^{^{\prime}%
}(u^{\varepsilon_{n}}_{t})r^{\varepsilon_{n}}_{t}$ a.s. converges to a measure
$\sigma$, with finite total variation on $[0,T]\times B_{R}$, and then it was
proved that a.s. $\sigma=0$.

\section{Remarks on a few variants\label{section variants}}

In a sentence, the core of the method is the commutator lemma (or
renormalizability of solutions) which requires classical assumptions on
$\left(  b,u\right)  $, plus a theorem of uniqueness of non-negative
$L^{\infty}$-solutions $v=E\left[  \beta\left(  u\right)  \right]  $ of the
parabolic equation (\ref{parabolic equation}). There are recent advanced
results on this parabolic equation under assumption on $b$ coherent with the
present framework, expecially \cite{Figalli} and \cite{LeBrisLions}. But they
do not fit precisely with our purposes for different reasons. For instance,
\cite{LeBrisLions} show that, due to the Laplacian, one can weaken the
assuptions on $b$, but the uniqueness is in the class of solutions $v$ with
the usual variational regularity (including $L^{2}\left(  0,T;W^{1,2}\left(
\mathbb{R}^{d}\right)  \right)  $). We do not know that $E\left[  \beta\left(
u\right)  \right]  $ has this regularity. The paper \cite{Figalli} deals with
only $L^{\infty}$-solutions $v$ of the parabolic equation
(\ref{parabolic equation}) and proves uniqueness, but again assuming
$\operatorname{div}b\in L^{\infty}$, the generalization of which is one of our
main purposes, otherwise the theory would be equal to the deterministic one.

This is the reason why we have given above a self-contained proof of
uniqueness for\ equation (\ref{parabolic equation}). There are other proofs,
under easier or different assumptions. We have given above, as a main theorem,
the one which allows us to deal with the $BV$ set-up and linear growth $b$. In
other directions of generality, or simplicity of proofs, we have the following
two results. We only sketch the proofs. The first theorem is a particular case
of theorem \ref{main theorem}, with $b=b_{1}$. We give a very simple proof by
semigroup theory, which could be generalized to other analytic semigroups in
$L^{1}\left(  \mathbb{R}^{d}\right)  $ different from the heat semigroup.

\begin{theorem}
If, in addition to hypothesis (\ref{assumption 1 on b}), we assume%
\[
Db\in L^{1}\left(  \left[  0,T\right]  \times\mathbb{R}^{d}\right)  ,\qquad
b\in L^{\infty}\left(  \left[  0,T\right]  \times\mathbb{R}^{d}\right)  .
\]
(which includes (\ref{weak differentiability assumption})), then there exists
a unique weak $L^{\infty}$-solution of equation (\ref{stoch transport eq}).
\end{theorem}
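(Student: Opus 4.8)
The plan is to follow the same three-step structure as the proof of Theorem \ref{main theorem}, but to replace Step 2 (the delicate Gronwall/weighted-energy argument that used the splitting $b = b_1 + b_2$ and the weight $(1+|x|)^{-N}$) by a short semigroup argument. As in Step 1 of the previous proof, let $u$ be the difference of two weak $L^\infty$-solutions; it has zero initial data, and by Theorem \ref{theorem renormalized stochastic} it is renormalized, so $v(t,x) = E[\beta_0(u(t,x))]$ is a weak $L^\infty$-solution of the parabolic PDE $\partial_t v + b\cdot\nabla v = \tfrac12 \Delta v$ with $v|_{t=0}=0$, for any $\beta_0 \in C^1$ with $\beta_0(0)=0$ and $\beta_0 > 0$ off the origin. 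It suffices to prove $v \equiv 0$, since then $u_t = 0$ $P$-a.s. and $u$ is the zero element of $L^\infty(\Omega\times[0,T]\times\mathbb{R}^d)$.

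For the uniqueness of $v$, the plan is to exploit that $b \in L^\infty([0,T]\times\mathbb{R}^d)$ and $Db \in L^1([0,T]\times\mathbb{R}^d)$, so that $b\cdot\nabla v$ can be rewritten in divergence form $\operatorname{div}(bv) - v\,\operatorname{div}b$ and the whole drift term $v \mapsto b_t\cdot\nabla v$ is, for a.e.\ $t$, a bounded perturbation of the generator in a suitable sense on $L^1(\mathbb{R}^d)$. Concretely, let $(S(t))_{t\geq0}$ be the heat semigroup $e^{\frac12 t\Delta}$ on $L^1(\mathbb{R}^d)$, which is analytic; write the PDE for $v$ in mild form
\[
v_t = -\int_0^t S(t-s)\,\mathrm{div}(b_s v_s)\,ds - \int_0^t S(t-s)\,(v_s\,\mathrm{div}\,b_s)\,ds .
\]
The key smoothing estimate is $\|\mathrm{div}\,S(\tau)\|_{L^1\to L^1} \le C\tau^{-1/2}$, which is the standard analytic-semigroup gradient bound for the heat kernel. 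Since $\|b_s\|_\infty$ is bounded uniformly in $s$ and $\|\mathrm{div}\,b_s\|_{L^1}$ is integrable in $s$, and $v \in L^\infty([0,T]\times\mathbb{R}^d)$ forces $v_s \in L^\infty$ with $\|v_s\|_\infty$ bounded, one gets a Volterra inequality
\[
\|v_t\|_{L^1(B_R)} \;\lesssim\; \int_0^t \Big( (t-s)^{-1/2}\|b_s\|_\infty + \|\mathrm{div}\,b_s\|_{L^\infty}\Big)\, \|v_s\|_{L^1}\, ds ,
\]
on any ball (or globally, after checking integrability near infinity — here one may either localise or observe $v_s \in L^\infty$ and use a cutoff), whose kernel is weakly singular and whose coefficient is $L^1$ in time; iterating (a generalized Gronwall lemma for Volterra kernels of the form $\tau^{-1/2}$) gives $v \equiv 0$ on $[0,T]$.

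The routine parts I would not grind through: verifying the mild formulation is equivalent to the weak formulation in Definition \ref{definition solution stoch} (integrate the weak equation against $S(t-\cdot)\varphi$ and use density/regularity of $S$), checking that all terms are in $L^1$ so the integrals make sense (this is where $b \in L^\infty$, $Db \in L^1$, $v \in L^\infty$ are used), and the elementary but slightly fiddly generalized Gronwall argument for the $\tau^{-1/2}$ kernel. The main obstacle — and the reason the proof is only sketched — is making the semigroup machinery rigorous for these low-regularity coefficients: one must justify that $v$ (which is merely a weak $L^\infty$-solution, not a variational solution) satisfies the mild equation, and one must handle the behaviour at spatial infinity since $b$ need not decay; a clean way is to approximate by smooth coefficients, derive the estimate with uniform constants, and pass to the limit, using Theorem \ref{theorem renormalized stochastic} and the approximation scheme of the previous proof to control the commutator error as in Step 3 there. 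Once the uniqueness of the parabolic problem is in hand, Steps 1 and 3 are verbatim as in the proof of Theorem \ref{main theorem}.
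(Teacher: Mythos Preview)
Your semigroup strategy is the right one and is exactly what the paper does, but the Volterra inequality you wrote down has a genuine gap. You use $\|\operatorname{div}b_s\|_{L^\infty}$ in the kernel, yet the hypotheses give only $Db\in L^1([0,T]\times\mathbb{R}^d)$; boundedness of $\operatorname{div}b$ is precisely the DiPerna--Lions condition the theorem is meant to \emph{avoid}. If you repair this by bounding $\|v_s\operatorname{div}b_s\|_{L^1}\le \|v_s\|_{L^\infty}\|\operatorname{div}b_s\|_{L^1}$, the right-hand side now carries $\|v_s\|_{L^\infty}$ rather than $\|v_s\|_{L^1}$, and the inequality no longer closes: Gronwall yields only $\|v_t\|_{L^1}\lesssim \|v\|_{L^\infty}\int_0^t\|\operatorname{div}b_s\|_{L^1}\,ds$, which is not zero. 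There is also the prior issue that $v$ is merely $L^\infty$, so neither $\|v_s\|_{L^1(\mathbb{R}^d)}$ nor the mild formula in $L^1$ is justified a priori; localising to $B_R$ does not help because the heat semigroup does not preserve supports.

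The paper's fix is to mollify $v$ rather than work with $v$ directly: set $v_\varepsilon=\theta_\varepsilon\ast v$, so that $v_\varepsilon$ solves $\partial_t v_\varepsilon+b\cdot\nabla v_\varepsilon=\tfrac12\Delta v_\varepsilon+r_\varepsilon$ with the commutator $r_\varepsilon$ as forcing. The crucial point is that one then estimates the $W^{1,1}(\mathbb{R}^d)$ norm, not the $L^1$ norm: since $\|b(s)\cdot\nabla v_\varepsilon(s)\|_{L^1}\le \|b\|_{L^\infty}\|v_\varepsilon(s)\|_{W^{1,1}}$ and the heat semigroup satisfies $\|T_{t-s}f\|_{W^{1,1}}\le C(t-s)^{-1/2}\|f\|_{L^1}$, one gets a closed inequality
\[
\int_0^T\|v_\varepsilon(t)\|_{W^{1,1}}\,dt\;\le\; C\sqrt{T}\int_0^T\|r_\varepsilon(t)\|_{L^1}\,dt \;+\; C\sqrt{T}\,\|b\|_{L^\infty}\int_0^T\|v_\varepsilon(t)\|_{W^{1,1}}\,dt,
\]
and for small $T$ the last term is absorbed. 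The global assumption $Db\in L^1([0,T]\times\mathbb{R}^d)$ enters precisely to make $r_\varepsilon\in L^1(\mathbb{R}^d)$ with $\|r_\varepsilon(t)\|_{L^1}\le C\|v\|_{L^\infty}\|b(t)\|_{W^{1,1}}$ and $\|r_\varepsilon(t)\|_{L^1}\to 0$; this yields $\int_0^T\|v_\varepsilon(t)\|_{W^{1,1}}\,dt\to 0$ and hence $v=0$. The divergence of $b$ never appears on its own: it is hidden inside the commutator, which is the small forcing, not inside the Gronwall kernel.
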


\begin{proof}
From the SPDE to the parabolic PDE (\ref{parabolic equation}) the proof is the
same as in theorem \ref{main theorem}. We have only to show uniqueness of the
solution $v$ to (\ref{parabolic equation}). Let $\theta_{\varepsilon}$ be the
mollifiers introduced above and let $v_{\varepsilon}\left(  t,\cdot\right)
=\theta_{\varepsilon}\ast v\left(  t,\cdot\right)  $ (convolution in space).
Take $\beta$ as in the proof of theorem \ref{main theorem}. We have%
\[
\frac{\partial v_{\varepsilon}}{\partial t}+b\cdot\nabla v_{\varepsilon}%
=\frac{1}{2}\Delta v_{\varepsilon}+r_{\varepsilon},\qquad v_{\varepsilon
}\left(  0\right)  =0
\]
where $r_{\varepsilon}$ is the usual commutator. Therefore%
\[
v_{\varepsilon}\left(  t\right)  =\int_{0}^{t}T_{t-s}\left(  r_{\varepsilon
}\left(  s\right)  -b\left(  s\right)  \cdot\nabla v_{\varepsilon}\left(
s\right)  \right)  ds
\]
where%
\[
\left(  T_{t}f\right)  \left(  x\right)  :=\int_{\mathbb{R}^{d}}f\left(
x+y\right)  \left(  2\pi t\right)  ^{-d/2}\exp\left(  -\frac{\left\vert
y\right\vert ^{2}}{2t}\right)  dy.
\]
Notice that $v_{\varepsilon}\in L^{\infty}\left(  \left[  0,T\right]
\times\mathbb{R}^{d}\right)  $, hence all the previous integrals are well defined.

We have
\[
\left\Vert r_{\varepsilon}\left(  t\right)  \right\Vert _{L^{1}\left(
\mathbb{R}^{d}\right)  }\leq C\left\Vert v\left(  t\right)  \right\Vert
_{L^{\infty}\left(  \mathbb{R}^{d}\right)  }\left\Vert b\left(  t\right)
\right\Vert _{W^{1,1}\left(  \mathbb{R}^{d}\right)  }%
\]
and $\left\Vert r_{\varepsilon}\left(  t\right)  \right\Vert _{L^{1}\left(
\mathbb{R}^{d}\right)  }\rightarrow0$ as $\varepsilon\rightarrow0$, see
\cite{PL Lions}, lemma 2.3. Moreover, the heat semigroup has the following
property:%
\[
\left\Vert \int_{0}^{t}T_{t-s}f\left(  s\right)  ds\right\Vert _{W^{1,1}%
\left(  \mathbb{R}^{d}\right)  }\leq\int_{0}^{t}\frac{C}{\left(  t-s\right)
^{1/2}}\left\Vert f\left(  s\right)  \right\Vert _{L^{1}\left(  \mathbb{R}%
^{d}\right)  }ds.
\]
This implies, with $f\left(  s\right)  =r_{\varepsilon}\left(  s\right)
-b\left(  s\right)  \cdot\nabla v_{\varepsilon}\left(  s\right)  $%
\begin{align*}
\int_{0}^{T}\left\Vert v_{\varepsilon}\left(  t\right)  \right\Vert
_{W^{1,1}\left(  \mathbb{R}^{d}\right)  }dt  & \leq\int_{0}^{T}\int_{0}%
^{t}\frac{C}{\left(  t-s\right)  ^{1/2}}\left\Vert f\left(  s\right)
\right\Vert _{L^{1}\left(  \mathbb{R}^{d}\right)  }dsdt\\
& \leq C\sqrt{T}\int_{0}^{T}\left\Vert f\left(  s\right)  \right\Vert
_{L^{1}\left(  \mathbb{R}^{d}\right)  }ds
\end{align*}
and thus%
\[
\int_{0}^{T}\left\Vert v_{\varepsilon}\left(  t\right)  \right\Vert
_{W^{1,1}\left(  \mathbb{R}^{d}\right)  }dt\leq C\sqrt{T}\int_{0}%
^{T}\left\Vert r_{\varepsilon}\left(  t\right)  \right\Vert _{L^{1}\left(
\mathbb{R}^{d}\right)  }dt+C\sqrt{T}\int_{0}^{T}\left\Vert b\right\Vert
_{L^{\infty}}\left\Vert v_{\varepsilon}\left(  t\right)  \right\Vert
_{W^{1,1}\left(  \mathbb{R}^{d}\right)  }dt.
\]
For small $T>0$ this gives us%
\[
\int_{0}^{T}\left\Vert v_{\varepsilon}\left(  t\right)  \right\Vert
_{W^{1,1}\left(  \mathbb{R}^{d}\right)  }dt\leq C_{T}\int_{0}^{T}\left\Vert
r_{\varepsilon}\left(  t\right)  \right\Vert _{L^{1}\left(  \mathbb{R}%
^{d}\right)  }dt
\]
and thus (by the properties of $r_{\varepsilon}$ recalled above and Lebesgue
theorem) $\lim_{\varepsilon\rightarrow0}\int_{0}^{T}\left\Vert v_{\varepsilon
}\left(  t\right)  \right\Vert _{W^{1,1}\left(  \mathbb{R}^{d}\right)  }dt=0$.
This implies $v=0$. The proof is complete.
\end{proof}

Next theorem is a little generalization of theorem \ref{main theorem} in the
direction of the so called Prodi-Serrin condition of fluid dynamics. The
stronger condition $\frac{2}{q}+\frac{d}{p}<1$ is the basic one in the work
\cite{Kry-Ro}. Under the same assumption it has been proved in \cite{FedFla}
that the stochastic characteristics generate a flow of H\"{o}lder
homeomorphisms, so probably this assumption could be related to a future
generalization of the approach of \cite{FGP}.

\begin{theorem}
Theorem \ref{main theorem} remains true if we replace assumption $b_{1}\in
L^{2}\left(  0,T;L^{\infty}\left(  \mathbb{R}^{d}\right)  \right)  $ with%
\[
b\in L^{q}\left(  0,T;L^{p}\left(  \mathbb{R}^{d}\right)  \right)
,\qquad\frac{2}{q}+\frac{d}{p}\leq1,\qquad p,q\in(1,\infty).
\]

\end{theorem}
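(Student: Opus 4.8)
The plan is to re-run, almost verbatim, the three–step argument of the proof of Theorem \ref{main theorem}, changing only the one place where the bound $b_{1}\in L^{2}(0,T;L^{\infty})$ was used. Step~1 (renormalization of the difference $u$ of two solutions and passage to the parabolic PDE $\partial_{t}v+b\cdot\nabla v=\tfrac12\Delta v$, $v|_{t=0}=0$, for $v=E[\beta_{0}(u)]$) relies only on Theorem \ref{theorem renormalized stochastic} and Proposition \ref{proposition Stratonovich Ito}, and uses no $L^{\infty}$ bound on $b_{1}$; likewise Step~3 ($\int\varphi\,E[\beta_{0}'(u^{\varepsilon}_{t})r^{\varepsilon}_{t}]\to0$) rests on Proposition \ref{stima epsilon} and the estimates of Theorem \ref{comm. ambrosio} only. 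Hence both carry over unchanged. The only term in the energy identity (\ref{identity time derivative}) (with $\varphi(x)=(1+|x|)^{-N}$, $N>d$) that must be re-estimated is $-2\int\varphi\,v_{\varepsilon}\,b_{1}\cdot\nabla v_{\varepsilon}$; the Laplacian term, the $b_{2}$–term (via $|b_{2}|/(1+|x|)\in L^{1}(0,T;L^{\infty})$ and $\operatorname{div}b_{2}\in L^{1}(0,T;L^{\infty})$) and the commutator term are treated exactly as before.

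So the work concentrates on bounding $-2\int\varphi\,v_{\varepsilon}\,b_{1}\cdot\nabla v_{\varepsilon}$ under $b_{1}\in L^{q}(0,T;L^{p})$ with $\tfrac2q+\tfrac dp\le1$. First, weighted Cauchy–Schwarz gives $2\int\varphi|b_{1}||v_{\varepsilon}||\nabla v_{\varepsilon}|\le\tfrac18\int\varphi|\nabla v_{\varepsilon}|^{2}+C\int\varphi|b_{1}|^{2}v_{\varepsilon}^{2}$, so it suffices to absorb $\int\varphi|b_{1}|^{2}v_{\varepsilon}^{2}$. Setting $w_{\varepsilon}:=\varphi^{1/2}v_{\varepsilon}$ and using $|\nabla\varphi|\le N\varphi/(1+|x|)\le N\varphi$, one has $w_{\varepsilon}\in H^{1}(\mathbb{R}^{d})$ with $\|\nabla w_{\varepsilon}\|_{L^{2}}^{2}\le 2\int\varphi|\nabla v_{\varepsilon}|^{2}+\tfrac{N^{2}}2\int\varphi v_{\varepsilon}^{2}$. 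Then Hölder ($\tfrac1p+\tfrac{p-2}{2p}=\tfrac12$) gives $\int|b_{1}|^{2}w_{\varepsilon}^{2}\le\|b_{1}(t)\|_{L^{p}}^{2}\|w_{\varepsilon}\|_{L^{2p/(p-2)}}^{2}$, and the Gagliardo–Nirenberg–Sobolev inequality $\|w\|_{L^{2p/(p-2)}}\le C\|w\|_{L^{2}}^{1-d/p}\|\nabla w\|_{L^{2}}^{d/p}$ (legitimate for $p\ge2$: $\tfrac2q+\tfrac dp\le1$ with $q<\infty$ forces $q>2$ and $p>d$, hence $2p/(p-2)\le 2^{*}$) together with Young's inequality with exponents $p/d$ and $p/(p-d)$ gives, for every $\mu>0$,
\[
\int\varphi|b_{1}|^{2}v_{\varepsilon}^{2}\le\mu\int\varphi|\nabla v_{\varepsilon}|^{2}+C_{\mu}\big(\|b_{1}(t)\|_{L^{p}}^{2p/(p-d)}+\|b_{1}(t)\|_{L^{p}}^{2}\big)\int\varphi v_{\varepsilon}^{2}.
\]
Choosing $\mu$ small enough that the total coefficient of $\int\varphi|\nabla v_{\varepsilon}|^{2}$ is dominated by the negative contribution $-\tfrac12\int\varphi|\nabla v_{\varepsilon}|^{2}$ coming from $\tfrac12\int\varphi v_{\varepsilon}\Delta v_{\varepsilon}$, identity (\ref{identity time derivative}) becomes
\[
\frac{d}{dt}\int\varphi|v_{\varepsilon}|^{2}+\frac14\int\varphi|\nabla v_{\varepsilon}|^{2}\le C\,\alpha(t)\int\varphi|v_{\varepsilon}|^{2}+C\|v\|_{L^{\infty}([0,T]\times\mathbb{R}^{d})}\int\varphi\,E\!\left[\,\big|\beta_{0}'(u^{\varepsilon}_{t})r^{\varepsilon}_{t}\big|\,\right],
\]
with $\alpha(t):=\|b_{1}(t)\|_{L^{p}}^{2p/(p-d)}+\|b_{1}(t)\|_{L^{p}}^{2}+\big\|\tfrac{b_{2}(t)}{1+|x|}\big\|_{L^{\infty}}^{2}+\|\operatorname{div}b_{t}\|_{L^{\infty}}$. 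The key point is that $\tfrac{2p}{p-d}\le q$ is precisely equivalent to $\tfrac2q+\tfrac dp\le1$, so $\|b_{1}(\cdot)\|_{L^{p}}^{2p/(p-d)}\in L^{1}(0,T)$ (and the endpoint $\tfrac2q+\tfrac dp=1$ is allowed, since only integrability and not smallness is used); since moreover $q>2$, also $\|b_{1}(\cdot)\|_{L^{p}}^{2}\in L^{1}(0,T)$, whence $\alpha\in L^{1}(0,T)$. Gronwall's lemma and Step~3 then give $\int\varphi|v_{\varepsilon}(t)|^{2}\to0$ for all $t$, hence $v=0$ and uniqueness follows as in Step~1.

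The only genuinely delicate point is this exponent bookkeeping, i.e.\ checking that the form–boundedness of $|b_{1}|^{2}$ relative to $-\Delta$ furnished by the Prodi–Serrin condition is exactly strong enough to close the Gronwall argument; once the Hölder and Young exponents are chosen as above it is automatic. (When $d=1$ and $1<p<2$, the Hölder step above must instead be carried out by splitting $b_{1}$ into its large and small parts, using $H^{1}(\mathbb{R})\hookrightarrow L^{\infty}(\mathbb{R})$ and the local integrability of $b_{1}$ from (\ref{assumption 1 on b}); everything else is unchanged.)
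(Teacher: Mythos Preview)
Your proposal is correct and follows essentially the same route as the paper: only the estimate on $-2\int\varphi v_{\varepsilon}b_{1}\cdot\nabla v_{\varepsilon}$ is modified, via Cauchy--Schwarz, the substitution $w_{\varepsilon}=\sqrt{\varphi}\,v_{\varepsilon}$, H\"older to reach $\|w_{\varepsilon}\|_{L^{2p/(p-2)}}$, interpolation between $L^{2}$ and $H^{1}$ with parameter $\gamma=d/p$, and Young's inequality, after which the Prodi--Serrin condition $\tfrac{2}{q}+\tfrac{d}{p}\le1$ is exactly the integrability $\|b_{1}(\cdot)\|_{L^{p}}^{2/(1-\gamma)}=\|b_{1}(\cdot)\|_{L^{p}}^{2p/(p-d)}\in L^{1}(0,T)$ needed for Gronwall. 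The paper phrases the interpolation through the fractional space $W^{\gamma,2}$ and the full $W^{1,2}$ norm rather than your Gagliardo--Nirenberg form with $\|\nabla w_{\varepsilon}\|_{L^{2}}$, but this is the same inequality; your added remarks that $q>2$, $p>d$ and your parenthetical on the one-dimensional case $1<p<2$ (which the paper does not discuss) are welcome clarifications.
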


\begin{proof}
We have only to modify the estimate for $\int\varphi v_{\varepsilon}b_{1}%
\cdot\nabla v_{\varepsilon}$. Here we use the following bounds:
\[
\left\vert \int\varphi v_{\varepsilon}b_{1}\cdot\nabla v_{\varepsilon
}dx\right\vert \leq\frac{1}{4}\int\varphi\left\vert \nabla v_{\varepsilon
}\right\vert ^{2}dx+C^{\ast}\int\varphi\left\vert b_{1}\right\vert
^{2}v_{\varepsilon}^{2}dx
\]
\[
\int\varphi\left\vert b_{1}\right\vert ^{2}v_{\varepsilon}^{2}dx\leq\left(
\int\left\vert b_{1}\right\vert ^{p}dx\right)  ^{2/p}\left(  \int\left\vert
\sqrt{\varphi}v_{\varepsilon}\right\vert ^{2p/\left(  p-2\right)  }dx\right)
^{\frac{p-2}{p}}%
\]
and by an interpolation inequality%
\begin{align*}
& \leq C\left(  \int\left\vert b_{1}\right\vert ^{p}dx\right)  ^{2/p}%
\left\Vert \sqrt{\varphi}v_{\varepsilon}\right\Vert _{W^{\gamma,2}}^{2}\\
& \leq C\left(  \int\left\vert b_{1}\right\vert ^{p}dx\right)  ^{2/p}%
\left\Vert \sqrt{\varphi}v_{\varepsilon}\right\Vert _{L^{2}}^{2-2\gamma
}\left\Vert \sqrt{\varphi}v_{\varepsilon}\right\Vert _{W^{1,2}}^{2\gamma}\\
& \leq\frac{1}{8C^{\ast}}\left\Vert \sqrt{\varphi}v_{\varepsilon}\right\Vert
_{W^{1,2}}^{2}+C\left(  \int\left\vert b_{1}\right\vert ^{p}dx\right)
^{\frac{2}{p}\cdot\frac{1}{1-\gamma}}\left\Vert \sqrt{\varphi}v_{\varepsilon
}\right\Vert _{L^{2}}^{2}%
\end{align*}
where $\gamma<1$, $\frac{p-2}{2p}=\frac{1}{2}-\frac{\gamma}{d}$, namely
$\gamma=\frac{d}{p}$. We have%
\[
\left\Vert \sqrt{\varphi}v_{\varepsilon}\right\Vert _{W^{1,2}}^{2}\leq
\int\varphi v_{\varepsilon}^{2}dx+2\int\left\vert v_{\varepsilon}\nabla
\sqrt{\varphi}\right\vert ^{2}dx+2\int\varphi\left\vert \nabla v_{\varepsilon
}\right\vert ^{2}dx
\]
and, recalling that $\left\vert \nabla\varphi\right\vert \leq N\left\vert
\varphi\right\vert $ and $\left\vert \varphi\right\vert \leq1$,
\[
\int\left\vert v_{\varepsilon}\nabla\sqrt{\varphi}\right\vert ^{2}dx\leq
\frac{1}{2}\int v_{\varepsilon}^{2}\frac{\left\vert \nabla\varphi\right\vert
^{2}}{\sqrt{\varphi}}dx\leq\frac{1}{2}\int v_{\varepsilon}^{2}\frac
{N^{2}\left\vert \varphi\right\vert ^{2}}{\sqrt{\varphi}}dx\leq\frac{N^{2}}%
{2}\int v_{\varepsilon}^{2}\varphi dx.
\]
Therefore%
\[
\left\Vert \sqrt{\varphi}v_{\varepsilon}\right\Vert _{W^{1,2}}^{2}\leq
C_{N}\int v_{\varepsilon}^{2}\varphi dx+2\int\varphi\left\vert \nabla
v_{\varepsilon}\right\vert ^{2}dx.
\]
Summarizing, we have proved that%
\[
\left\vert \int\varphi v_{\varepsilon}b_{1}\cdot\nabla v_{\varepsilon
}dx\right\vert \leq\frac{1}{4}\int\varphi\left\vert \nabla v_{\varepsilon
}\right\vert ^{2}dx+C_{N}\left(  1+\left\Vert b_{1}\right\Vert _{L^{p}\left(
\mathbb{R}^{d}\right)  }^{\frac{2}{1-\gamma}}\right)  \int v_{\varepsilon}%
^{2}\varphi dx
\]
for a suitable constant $C_{N}$. It is now easy to complete the proof of
theorem \ref{main theorem} by Gronwall lemma, if we check that%
\[
\frac{2}{1-\gamma}\leq q.
\]
Since $\gamma=\frac{d}{p}$, the inequality is $\frac{2}{q}\leq1-\gamma
=1-\frac{d}{p}$, which is preisely our assumption. The proof is complete.
\end{proof}

\section{Example\label{section example}}

We give a simple example, with the flavour of shear flows, which is covered by
theorem \ref{main theorem} but apparently not by the previous deterministic or
stochastic works.

Consider $d=2$ and
\[
b\left(  x,y\right)  =sign\left(  y\right)  \left(
\begin{array}
[c]{c}%
1\\
2\sqrt{\left\vert y\right\vert }%
\end{array}
\right)  .
\]
This is not covered by \cite{DiPernaLions} or \cite{Ambrosio} because
\[
\operatorname{div}b=\frac{1}{\sqrt{\left\vert y\right\vert }}%
\]
is not bounded, and not by \cite{FGP} because $b$ is discontinuous. Without
noise, the equations of characteristics%
\begin{align*}
X^{\prime}  & =sign\left(  Y\right)  \\
Y^{\prime}  & =2sign(Y)\sqrt{\left\vert Y\right\vert }%
\end{align*}
have multiple solutions from every initial condition on the line $\left(
x_{0},0\right)  $, $x_{0}\in\mathbb{R}$, the ideal surface of separation of
this \textquotedblleft compressible shear flow\textquotedblright, which move
away fron the surface as in a sort of instability process. Using these
multiple solutions one can write down multiple solutions of the deterministic
trasport equation, with any initial condition $u_{0}$. On the contrary, the
stochastic equation (\ref{stoch transport eq}) is well posed, since we may
apply theorem \ref{main theorem} with%
\[
b_{1}\left(  x,y\right)  =sign\left(  y\right)  \left(
\begin{array}
[c]{c}%
1\\
2\sqrt{\left\vert y\right\vert \wedge1}%
\end{array}
\right)
\]
and $b_{2}=b-b_{1}$. We have $\partial_{y}b^{\left(  1\right)  }\left(
x,y\right)  =2\delta\left(  y\right)  $, integrable with $N=2$ in the sense of
the assumptions of theorem \ref{main theorem};\ and $Db_{2}$ bounded, so it is
easy to see that the assumptuions of the main theorem are fulfilled and we
have uniqueness.


\begin{thebibliography}{99}                                                                                               %


\bibitem {Ambrosio}Ambrosio, L.: Transport equation and Cauchy problem for
$BV$ vector fields, \textit{Invent. Math.} \textbf{158}, 227-260 (2004).

\bibitem {AmbrCrippa}Ambrosio, L., Crippa G.: Existence, uniqueness, stability
and differentiability properties of the flow associated to weakly
differentiable vector fields, in {Transport Equations and Multi-D Hyperbolic
Conservation Laws,} {\ Lecture Notes of the Unione Matematica Italiana}
\textbf{5}, Springer Verlag, 2008.

\bibitem {AmbrosioCrippaFigalliSpinolo}L. Ambrosio, G. Crippa, A. Figalli,
L.V. Spinolo, Some new well-posedness results for continuity and transport
equations and applications to the chromatography system, \textit{SIAM J. Math.
Anal.} \textbf{41 }(2009), no. 5, 1890--1920.

\bibitem {Arnold}L. Arnold, Stabilization by noise revisited, \textit{Z.
Angew. Math. Mech.} \textbf{70} (1990), no. 7, 235--246.

\bibitem {Arnold-co}L. Arnold, H. Crauel, V. Wihstutz, Stabilization of linear
systems by noise, \textit{SIAM J. Control Optim.} \textbf{21} (1983), no. 3, 451--461.

\bibitem {AttFla}S. Attanasio, F. Flandoli, Zero-noise solutions of linear
transport equations without uniqueness: an example, C. R. Acad. Sci. Paris,
Ser. I 347 (2009) 753756.

\bibitem {BFM}D. Barbato, F. Flandoli, F. Morandin, Uniqueness for a
Stochastic Inviscid Dyadic Model, arXiv:0910.4974v1, to appear on Proc. Amer.
Math. Soc.

\bibitem {BochCrippa0}F. Bouchut, G. Crippa, Uniqueness, renormalization, and
smooth approximations for linear transport equations, \textit{SIAM J. Math.
Anal.} \textbf{38} (2006), no. 4, 1316--1328

\bibitem {BochudCrippa}F. Bouchut, G. Crippa (S\'{e}minaire \'{E}quations aux
d\'{e}riv\'{e}es partielles (Polytechnique)) Equations de transport \`{a}
coefficient dont le gradient est donn\'{e} par uneint\'{e}grale singuli\`{e}re (2009).

\bibitem {DiPernaLions}DiPerna, R. J.\ and\ Lions, P. L.: Ordinary
differential equations, transport theory and Sobolev spaces, \textit{Invent.
Math.} \textbf{98}, 511-547 (1989).

\bibitem {FedFla}E. Fedrizzi, F. Flandoli, Pathwise uniqueness and continuous
dependence for SDE with nonregular drift, in preparation.

\bibitem {Figalli}A. Figalli, Existence and uniqueness of martingale solutions
for SDEs with rough or degenerate coefficients, \textit{J. Funct. Anal.}
\textbf{254} (2008), no. 1, 109--153.

\bibitem {FGP}F. Flandoli, M. Gubinelli, E. Priola, Well posedness of the
transport equation by stochastic perturbation,\textit{\ Invent. Math.}
\textbf{180} (2010), 1-53.

\bibitem {FGP-Levico}F. Flandoli, M. Gubinelli, E. Priola, Does noise improve
well-posedness of fluid dynamic equations?, Proceedings Levico 2008.

\bibitem {FGP-Eulero}F. Flandoli, M. Gubinelli, E. Priola, Full well-posedness
of point vortex dynamics corresponding to stochastic 2D Euler equations, arXiv:1004.1407.

\bibitem {Kry-Ro}N. V. Krylov, M. R\"{o}ckner, Strong solutions of stochastic
equations with singular time dependent drift, \textit{Probab. Theory Related
Fields} \textbf{131} (2005), 154-196.

\bibitem {K}Kunita, H.: Stochastic differential equations and stochastic flows
of diffeomorphisms. Ecole d'\'{e}t\'{e} de probabilit\'{e}s de Saint-Flour,
XII---1982, 143-303, Lecture Notes in Math. \textbf{1097}, Springer, Berlin, 1984.

\bibitem {LeBrisLions0}C. Le Bris, P.-L. Lions, Renormalized solutions of some
transport equations with partially $W^{1,1}$ velocities and applications,
\textit{Ann. Mat. Pura Appl.} (4) \textbf{183} (2004), no. 1, 97--130.

\bibitem {LeBrisLions}C. Le Bris, P.-L. Lions, Existence and uniqueness of
solutions to Fokker-Planck type equations with irregular coefficients,
\textit{Comm. Partial Differential Equations} \textbf{33} (2008), no. 7-9, 1272--1317.

\bibitem {PL Lions}P. L. Lions, \textit{Mathematical Topics in Fluid
Mechanics}, \textit{Vol. 1, Incompressible Models}, Oxford University Press,
New York, 1996.

\bibitem {Lopez et al}M. C. Lopes Filho, A. L. Mazzucato, H. J. Nussenzveig
Lopes, Weak solutions, renormalized solutions and enstrophy defects in 2D
turbulence, \textit{Arch. Ration. Mech. Anal.} \textbf{179} (2006), no. 3, 353--387.
\end{thebibliography}
\end{document}